\newtheorem{lemma}{Lemma}[section]
\newtheorem{remark}{Remark}[section]
\newcommand{\bremark}{\begin{remark} \em}
\newcommand{\eremark}{\end{remark} }
\newcommand{\rmnum}[1]{\romannumeral #1}
\newcommand{\Rmnum}[1]
{\expandafter\@slowromancap\romannumeral #1@}
\newtheorem{theorem}{Theorem}[section]
\newtheorem{definition}[theorem]{Definition}
\begin{document}
\begin{titlepage}
\title{
Global existence of spherically symmetry solutions for isothermal Euler-Poisson system outside a ball}
\author[]{
Lingjun Liu }
\affil{\footnotesize College of Mathematics, Faculty of Science, Beijing University of Technology, Beijing 100124, P.R.China. \\
	E-mail: lingjunliu@bjut.edu.cn.}
\date{}
\end{titlepage}
\maketitle
\vspace{-3em}

\abstract{\small In this paper, 
	we consider an isothermal Euler-Poisson system with self-gravitational force, modeling a compact star such as strange quark star. We prove that there exists  a global entropy solution with spherically symmetry outside a ball, through the fractional Lax-Friedrichs scheme and the theory of compensated compactness. 

\

{\bf Key words:} Euler-Poisson system, compact star,  global entropy solution, compensated compactness

\section{Introduction}\label{section1}

\setcounter{section}{1}

\setcounter{equation}{0}

In this paper, we are concerned with an Euler-Poisson system for compressible fluids with self-gravitational force in multi-dimension ($N\ge 2$),  
\begin{equation}\label{eq111}
\left\{
\begin{array}{ll}
\displaystyle\partial_t\rho+\nabla\cdot\overrightarrow{m}=0, \quad \quad \overrightarrow{x}\in\mathbb{R}^N,\quad t\geq0,\\[1mm]
\displaystyle\partial_t\overrightarrow{m}+\nabla\cdot\frac{\overrightarrow{m}\otimes \overrightarrow{m}}{\rho}+\nabla p(\rho)=\rho\nabla\Phi,\\
\displaystyle-\Delta\Phi=\rho,
\end{array}
\right.
\end{equation}
where $\rho, \overrightarrow{m}, \Phi$ denote the dust density, the momentum, the gravitational potential generated by the self-gravitation of the fluids respectively. $p(\rho)$ is the equation of state. In the case of compact star \cite{w1972}, $p(\rho)$ is a linear and increasing function of $\rho$ and the spatial dimension is 3, i.e., $N=3$. Without loss of generality, let $p(\rho)=\rho^\gamma$, $\gamma=1$. And this corresponds to the pressure state of isothermal gas for the compressible Euler equations. We consider the spherically symmetric solutions to the system \eqref{eq111} with the form:
\begin{equation}\label{eq112}
\begin{aligned}
(\rho, \overrightarrow{m}, \Phi)(\overrightarrow{x}, t)=(\rho(x, t), m(x, t)\frac{\overrightarrow{x}}{x}, \Phi(x, t)),\quad  x=|\overrightarrow{x}|. \\
\end{aligned}
\end{equation}
Then $(\rho, m, \Phi)(x, t)$ in \eqref{eq112} is governed by the one-dimensional Euler-Poisson system with geometric source terms:
\begin{equation}\label{eq113}
\left\{
\begin{array}{ll}
\displaystyle\partial_t\rho+\partial_xm=-\frac{N-1}{x}m, \\
\displaystyle\partial_tm+\partial_x(\frac{m^2}{\rho}+\rho)=-\frac{N-1}{x}\frac{m^2}{\rho}+\rho\partial_x\Phi,\\[3mm]
\displaystyle-\partial_{xx}\Phi=\frac{N-1}{x}\partial_x\Phi+\rho.
\end{array}
\right.
\end{equation}

In this paper, we focus on the system \eqref{eq113} outside a unit ball in $\mathbb{R}^N$, i.e. in the spatial region $\Pi=\{\overrightarrow{x}\in\mathbb{R}^N, x\geq1\}$.
By the Poisson equation of \eqref{eq113}, we have
\begin{equation}\label{eq117}
\begin{aligned}
\Phi_x=-\frac{1}{x^{N-1}}\int_1^xs^{N-1}\rho(s, t)ds , \quad x\geq1 
\end{aligned}
\end{equation}
with boundary conditions
\begin{equation}\label{eq115}
\left\{
\begin{array}{ll}
\Phi_x\big|_{\partial\Pi}=0, \quad \ \mbox{a.e.} \quad t>0, \\
\Phi(x, t)\rightarrow0,\quad  \ \mbox{as} \quad x\rightarrow+\infty, \quad \ \mbox{a.e.} \quad t>0.\\
\end{array}
\right.
\end{equation}
Then the system \eqref{eq113} becomes
\begin{equation}\label{eq118}
\left\{
\begin{array}{ll}
\displaystyle\partial_t\rho+\partial_xm=-\frac{N-1}{x}m, \\
\partial_tm+\partial_x(\frac{m^2}{\rho}+\rho)=-\frac{N-1}{x}\displaystyle\frac{m^2}{\rho}-\frac{\rho}{x^{N-1}}\int_1^xs^{N-1}\rho(s, t)ds,\\
\end{array}
\right.
\end{equation}
with an initial-boundary conditions
\begin{equation}\label{eq114}
\left\{
\begin{array}{ll}
(\rho, m)(x,t=0)=(\rho_0(x),m_0(x)),\\
m\big|_{\partial\Pi}=0, \qquad \ \mbox{a.e.} \quad t>0.
\end{array}
\right.
\end{equation}
The conserved total mass is
\begin{equation}\label{eq116}
\begin{aligned}
M:=\iint\limits_{\Pi} \rho(\overrightarrow{x},t)d\overrightarrow{x}=\int_1^{+\infty}\omega_nx^{N-1}\rho(x,t)dx=\int_1^{+\infty}\omega_nx^{N-1}\rho_0(x)dx,
\end{aligned}
\end{equation}
 where $\omega_n$ is the volume of unit ball in ${\mathbb{R}^N}$ and $N\geq2$.

To treat the terms arising from the geometric source, it is more convenient to introduce  weight density $\varrho$ and momentum $\omega $ as follows, 
\begin{equation}\label{eq119}
\begin{aligned}
\varrho=x^{N-1}\rho, \quad \omega=x^{N-1}m,
\end{aligned}
\end{equation}
satisfying
\begin{equation}\label{eq120}
\left\{
\begin{array}{ll}
\displaystyle\partial_t\varrho+\partial_x\omega=0, \\
\displaystyle\partial_t\omega+\partial_x(\frac{\omega^2}{\varrho}+\varrho)=\frac{N-1}{x}\varrho-\frac{\varrho}{x^{N-1}}\int_1^x\varrho(s, t)ds,\\
\end{array}
\right.
\end{equation}
with 
\begin{equation}\label{eq1114}
\left\{
\begin{array}{ll}
(\varrho, \omega)(x,t=0)=(\varrho_0(x),\omega_0(x)),\\
\omega\big|_{\partial\Pi}=0, \qquad \ \mbox{a.e.} \quad t>0.
\end{array}
\right.
\end{equation}

Since the system \eqref{eq111} is a multi-dimensional hyperbolic system of conservation laws with self-gravitational force, the smooth solution may break down in finite time. Therefore one has to investigate the weak entropy solutions with general initial date. When the gravitational potential disappears, the system \eqref{eq111} is a compressible Euler system
\begin{equation}\label{eqd111}
    \left\{
    \begin{array}{ll}
 \displaystyle   \partial_t\rho+\nabla\cdot\overrightarrow{m}=0,\\[3mm]
  \displaystyle  \partial_t\overrightarrow{m}+\nabla\cdot\frac{\overrightarrow{m}\otimes \overrightarrow{m}}{\rho}+\nabla p(\rho)=0,
    \end{array}
    \right.
    \end{equation}
    where the shock is usually generated in finite time no matter how the initial values smooth, see \cite{smoller}.

Various efforts have been made on the existence of weak entropy solutions to the Euler-Poisson system \eqref{eq111}  and Euler system \eqref{eqd111}. For the one-dimensional Euler system \eqref{eqd111} with the initial data staying away from vacuum, the global existence of BV solution for isothermal case (i.e. $ \gamma=1$) was first established by Nishida \cite{N1},
and the case for $\gamma>1$ was later proved by Nishida-Smoller \cite{NS}. When the solution contains vacuum, DiPerna \cite{D} gave the first global existence result for $\gamma=1+\frac{2}{2N+1}\ ( N\ge2)$ by the theory of compensated compactness due to Murat \cite{M12} and Tartar \cite{T11}. Then the global existence of $L^\infty$ weak entropy solution was shown by Ding, Chen and Luo \cite{C1, DCL1, DCL2} via the Lax-Friedrichs scheme 
for $1<\gamma\leq\frac{5}{3}$. Whereafter, global existence results for $\gamma>1$  were obtained in \cite{LPS, LPT} through the kinetic formulation. The case for $ \gamma=1$ with vacuum was achieved by Huang-Wang in \cite{HW}.

The global existence of solutions to the multidimensional Euler system \eqref{eqd111} is a huge challenging open problem. There are very few works except for the solutions with spherically symmetry. When $\gamma=1$, the global existence of BV solutions outside a ball was given by Makino-Mizohata-Ukai \cite{MMU1, MMU2} via Glimm scheme. When $\gamma\in(1,\frac{5}{3}]$, there have some interesting works related to the weak entropy solutions with spherically symmetry in \cite{C2, CG, CP, HLY1, LW, MT, T} with the help of the theory of compensated compactness. 

Now let's move to 
the Euler-Poisson system \eqref{eq111}. 
 For the $3-D$ gaseous stars, a compactly supported expanding classical solution was discovered by Goldreich-Weber \cite{GW}, see also \cite{FL, M2}. Hadzic-Jang \cite{HJ1} further proved the nonlinear stability of the Goldreich-Weber solution 
 with $\gamma=\frac{4}{3}$. 
 In addition, Hadzic-Jang \cite{HJ2} constructed a class of global-in-time solutions of the $3-D$ Euler-Poisson equations 
 for $\gamma=1+\frac{1}{k}, k\in\mathbb{N}\setminus{\{1\}}$ or $\gamma\in(1, \frac{14}{13})$. More recently, Guo-Hadzic-Jang \cite{GHJ} constructed a 
family of collapsing solutions of Euler-Poisson system \eqref{eq111}. 
For general initial data, one has to seek for the weak entropy solutions. 
Makino \cite{M} and 
 Xiao \cite{X} got the local and global existence of weak entropy solutions with spherically symmetry for the gaseous star model \eqref{eq111} with $\gamma\in(1,\frac{5}{3}]$ outside a ball respectively. 
    
In this paper, we 
aim to obtain the global existence of weak entropy solutions with spherically symmetry 
 for the isothermal Euler-Poisson system \eqref{eq111} outside a ball.

Below we give the definition of weak entropy solution.
\vskip 0.1in
\begin{definition}\label{def2} A pair of mappings $(\eta, q): \mathbb{R}^2\rightarrow\mathbb{R}^2$ are said to be entropy pair of system \eqref{eq124}, if they satisfy
\begin{equation}\label{eq130}
\begin{aligned}
\bigtriangledown q=\bigtriangledown\eta\bigtriangledown f.
\end{aligned}
\end{equation}
If $\eta|_{\varrho=0}=0$ and $\bigtriangledown^2\eta\geq0$, $\eta$ is called a weak and convex entropy.
\end{definition}
\vskip 0.1in


It is well known that the mechanic entropy flux pair
\begin{equation}\label{eq13391}
 \begin{aligned}
\eta_e=\frac{1}{2}\frac{m^2}{\rho}+\rho\log\rho,\ \ q_e=\frac{1}{2}\frac{m^3}{\rho^2}+m+m\log\rho
\end{aligned}
\end{equation}
is weak and convex. 


\vskip 0.1in
    \begin{definition}[\bf Weak entropy solution]\label{def1}  
       For any $T>0$, the function $(\rho, m)(x,t)\in L^\infty(\Pi_T)$, $\Pi_T:=[1,+\infty)\times[0,T)$, is called a global weak entropy solution of \eqref{eq118} $-$ \eqref{eq114} in the region $\Pi_T$ provided that, it holds for any test function $\varphi\in C_0^\infty(\Pi_T)$, 
    \begin{equation}\label{eq140}
    \left\{
    \begin{array}{ll}
\displaystyle    \int_{0}^{+\infty}\int_1^{+\infty}(\rho\varphi_t+m\varphi_x-\frac{N-1}{x}m\varphi )dxdt+\int_1^{+\infty}\rho_0(x)\varphi(x,0)dx=0,  \\[3mm]
\displaystyle    \int_{0}^{+\infty}\int_1^{+\infty}\big[m\varphi_t+(\frac{m^2}{\rho}+\rho)\varphi_x-(\frac{N-1}{x}\frac{m^2}{\rho}+\frac{\rho}{x^{N-1}}\int_1^xy^{N-1}\rho(y,t)dy)\varphi\big]dxdt\\
    \qquad  \qquad\qquad \qquad\displaystyle+\int_1^{+\infty}m_0(x)\varphi(x,0)dx=0,
    \end{array}
    \right.
    \end{equation}
    and for any non-negative test function $\psi\in C_0^\infty(\Pi_T)$, 
    \begin{equation}\label{eq008}
    \begin{array}{ll}
    \displaystyle   \int_{0}^{+\infty}\int_1^{+\infty}\Big[\eta_e(\rho,m)\psi_t+q_e(\rho,m)\psi_x-\frac{N-1}{x}m\eta_{e\rho}(\rho,m)\psi-\Big(\frac{N-1}{x}\frac{m^2}{\rho}\\ 
    \displaystyle+\frac{\rho}{x^{N-1}}\int_1^xy^{N-1}\rho(y,t)dy\Big)\eta_{em}(\rho,m)\psi\Big]dxdt+\int_1^{+\infty}\eta_e(\rho_0,m_0)\psi(x,0)dx\geq0,
       \end{array}
    \end{equation}
    for the mechanical entropy $(\eta_e,q_e)(\rho,m)$ given in  \eqref{eq13391}, and when $\varepsilon\rightarrow0$,
    \begin{equation}\label{eq08}
		\begin{aligned}
            \frac{1}{\varepsilon}\int_1^{1+\varepsilon}m(x,t)dx
            {\rightarrow}0 \quad \ \ \mbox{in} \quad L^\infty_{loc}((0,\infty)) \ \mbox{weak}^*.
        \end{aligned}
	\end{equation}      
\end{definition}
    \vskip 0.1in


Our main result is given as follows.
\vskip 0.1in

\begin{theorem}\label{theorem2} Assume that the initial data satisfies
\begin{equation}\label{eq141}
\begin{array}{ll}
0\leq\rho_0(x)\leq \frac{C_0}{x^{N-1}}<\infty, \qquad |m_0(x)|\leq\rho_0(x)(C_0+|ln(x^{N-1}\rho_0(x))|),\  \ \ a.e.,
\end{array}
\end{equation}
for some constant $C_0>0$. 
 Then for any $T>0$, there exists $C=C(T)>0$ such that, there exists a global entropy solution $(\rho(x,t),m(x,t))$ of initial boundary value problem \eqref{eq118}-\eqref{eq114} in the sense of Definition \ref{def1} so that
\begin{equation}\label{eq142}
\begin{array}{ll} 
0\leq\rho(x,t)\leq \frac{C}{x^{N-1}},\quad |m(x,t)|\leq\rho(x,t)(C+|ln(x^{N-1}\rho(x,t))|),\quad \  a.e.
\end{array}
\end{equation}
\end{theorem}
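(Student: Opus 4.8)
\noindent\emph{Idea of the proof.} The plan is to carry out the Lax--Friedrichs-scheme plus compensated-compactness program in the weighted unknowns $(\varrho,\omega)=(x^{N-1}\rho,\,x^{N-1}m)$ of \eqref{eq119}--\eqref{eq120}, for which the continuity equation $\varrho_t+\omega_x=0$ is source-free and only the momentum equation carries the geometric and nonlocal gravitational terms $S=\frac{N-1}{x}\varrho-\frac{\varrho}{x^{N-1}}\int_1^x\varrho(s,t)\,ds$. First I would construct approximate solutions $(\varrho^\varepsilon,\omega^\varepsilon)$, equivalently $(\rho^\varepsilon,m^\varepsilon)$, by the fractional Lax--Friedrichs scheme on a mesh whose step sizes obey a CFL condition: at each time level one solves the local Riemann problems of the homogeneous isothermal system $\varrho_t+\omega_x=0$, $\omega_t+(\omega^2/\varrho+\varrho)_x=0$, takes the Lax--Friedrichs average, and then performs an explicit fractional step for the source $S$, imposing a reflecting boundary condition at $x=1$ ($\varrho$ even, $\omega$ odd) so that the discrete momentum vanishes at the wall. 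In the weighted variables the discrete mass on $[1,x]$ is transported only by the flux $\omega^\varepsilon$, which is what makes the discrete counterpart of $\frac{1}{x^{N-1}}\int_1^x\varrho\,ds$ tractable.

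The second step, which I expect to be the main obstacle, is the uniform $L^\infty$ bound \eqref{eq142}. Introduce the Riemann invariants $W=\frac{m}{\rho}+\log(x^{N-1}\rho)$ and $Z=\frac{m}{\rho}-\log(x^{N-1}\rho)$ of the isothermal system; along the corresponding characteristic families one has $\frac{d}{dt}W=\frac{d}{dt}Z=S/\varrho=\frac{N-1}{x}-\frac{1}{x^{N-1}}\int_1^x\varrho\,ds$, and the scheme is designed so that the discrete $(W^\varepsilon,Z^\varepsilon)$ satisfy the discrete analogue of these relations up to $O(\Delta x)$. Since the gravitational contribution $-\frac{1}{x^{N-1}}\int_1^x\varrho\,ds$ is non-positive, $W$ and $Z$ are bounded above by $C_0+(N-1)t$; the complementary lower bounds on $W$ and $Z$ — which, combined with the upper bounds, yield the region \eqref{eq142}, in particular the decisive pointwise bound $x^{N-1}\rho\le C(T)$ — are the delicate point, and are obtained by a continuous-induction (Gronwall) argument in which the nonlocal term is controlled by combining $x\ge1$, the decay factors $x^{2-N}$ and $x^{1-N}(x-1)$ valid for $N\ge 2$, the identity $\frac{1}{x^{N-1}}\int_1^x\varrho\,ds=x^{1-N}\int_1^x\varrho_0(s)\,ds-\int_0^t m(x,\tau)\,d\tau$ coming from the continuity equation together with the wall condition, the running supremum of $x^{N-1}\rho$, and the momentum bound $|m|\le\rho(C+|\log(x^{N-1}\rho)|)$ itself. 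The difficulty is precisely that the nonlocal term ties the pointwise bound on $\varrho$ to an integral of $\varrho$ over $[1,x]$, and one must show that the resulting feedback does not blow up on $[0,T]$; the sign of gravity and the $x\ge1$ geometry are exactly what close the Gronwall loop for every finite $T$, yielding the region \eqref{eq142} as an approximately invariant region for the scheme uniformly in $\varepsilon$.

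Granted the uniform bounds, the third step is $H^{-1}_{\mathrm{loc}}$ compactness: using the discrete Lax--Friedrichs entropy dissipation, which is controlled by the mechanical entropy pair $(\eta_e,q_e)$ of \eqref{eq13391}, together with the fact that the source contributions are bounded in $L^1_{\mathrm{loc}}$, one shows that for every weak and convex entropy pair $(\eta,q)$ of the isothermal system the sequence $\partial_t\eta(\rho^\varepsilon,m^\varepsilon)+\partial_x q(\rho^\varepsilon,m^\varepsilon)$ is precompact in $H^{-1}_{\mathrm{loc}}(\Pi_T)$ (by Murat's lemma: bounded in $W^{-1,\infty}_{\mathrm{loc}}$ and precompact in $W^{-1,q}_{\mathrm{loc}}$ for some $q\in(1,2)$).

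Finally, the div--curl lemma of Murat--Tartar \cite{M12,T11} produces a Young measure $\nu_{x,t}$, supported in the region \eqref{eq142}, satisfying Tartar's commutation relation for all such entropy pairs. Because the source is a lower-order perturbation that does not enter this relation — which is pointwise in $(\rho,m)$ on $\mathrm{supp}\,\nu_{x,t}$ — the reduction of $\nu_{x,t}$ to a Dirac mass, or to a mass concentrated on the vacuum line $\rho=0$, is exactly the one for the homogeneous isothermal gas, which I would take from Huang--Wang \cite{HW}. Consequently $(\rho^\varepsilon,m^\varepsilon)\to(\rho,m)$ a.e.\ in $\Pi_T$, the limit inherits \eqref{eq142}, and passing to the limit in the discrete weak formulation and the discrete entropy inequality of the scheme yields \eqref{eq140} and \eqref{eq008}; the wall condition \eqref{eq08} follows since $m^\varepsilon$ vanishes in the boundary cell by construction, so the averages $\frac1\varepsilon\int_1^{1+\varepsilon}m\,dx$ converge to $0$ in $L^\infty_{\mathrm{loc}}((0,\infty))$ weak-$*$.
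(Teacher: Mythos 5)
Your proposal follows essentially the same program as the paper: a fractional Lax--Friedrichs scheme in the weighted variables $(\varrho,\omega)=(x^{N-1}\rho,x^{N-1}m)$, uniform $L^\infty$ bounds via the isothermal Riemann invariants and invariant regions, $H^{-1}_{loc}$ compactness of the entropy dissipation measures via Murat's interpolation lemma, reduction of the Young measure by the Huang--Wang isothermal framework, and passage to the limit in the weak formulation with the boundary trace handled in the Chen--Frid sense. The one place where you genuinely diverge is the step you single out as the main obstacle: controlling the nonlocal gravitational term $\frac{1}{x^{N-1}}\int_1^x\varrho\,ds$ inside the Riemann-invariant estimate. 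You propose a continuous-induction/Gronwall loop feeding the running supremum of $x^{N-1}\rho$ and the momentum bound back into the nonlocal term; the paper instead observes that the first equation is in conservation form, so the scheme (divergence theorem, Rankine--Hugoniot, CFL) propagates the bound $\int_1^\infty\varrho^l(\cdot,t)\,dx\le C(M)$ up to the small mass added by the vacuum cut-off $l^\beta$. With that, the source shifts each Riemann invariant by at most $\bigl((N-1)+C(M)\bigr)h$ per time step, and summing over $T/h$ steps gives the linear-in-$T$ growth directly --- no feedback loop is needed, and the sign of gravity is only used for the one-sided bound on $w$. Your Gronwall route could probably be made to work, but it is heavier than necessary and, as sketched, does not explain why the loop closes; the mass bound is the clean way out. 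One small inaccuracy: the approximate momentum does not vanish identically in the boundary cell (only its trace at $x=1$ does, through the initial--boundary Riemann problem), so the wall condition \eqref{eq08} should be recovered, as in the paper, from the weak form of the continuity equation and the divergence-measure trace theory rather than from pointwise vanishing of $m^\varepsilon$ near $x=1$.
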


\vskip 0.1in




\vskip 0.1in

 

Recently, by utilizing $L^p$ compensated compactness framework, Chen et al. \cite{CHWY} investigated the global finite-energy solution for Cauchy problem \eqref{eq111} with $\gamma>\frac{6}{5}$, while the case $\gamma\leq\frac{6}{5}$ is still open.

\

The rest of the paper is arranged as follows. Some preliminaries are introduced in section 2. In section 3,  a family of approximate solutions is constructed by the shock capturing scheme of Lax-Friedrichs type, and the uniform $L^\infty$ estimate of the approximate solutions is obtained through the Riemann invariants. In section 4, the $H^{-1}$ compactness of a family of entropy dissipation measures is derived by the compactness embedding technique. In section 5, a convergent subsequence is achieved by the compensated compactness framework, and the limit of this subsequence is shown to be the global entropy solution of the isothermal Euler-Poisson system \eqref{eq111} with spherical symmetry. 

\section{Preliminaries}\label{section2}

\setcounter{section}{2}

\setcounter{equation}{0}

 In this section, we denote
\begin{equation}\label{eq122}
\begin{aligned}
\displaystyle v:=(\varrho, \omega)^T, \quad f(v):=(f_1, f_2)(v)=(\omega, \frac{\omega^2}{\varrho}+\varrho)^T,\quad  \\
\displaystyle g(v):=(g_1, g_2)(v)=(0, \frac{N-1}{x}\varrho-\frac{\varrho}{x^{N-1}}\int_1^x\varrho(s, t)ds)^T.
\end{aligned}
\end{equation}
The scalar functions $\varrho(x, t)$ and $\omega(x, t)$ in \eqref{eq120}-\eqref{eq1114} satisfy
\begin{equation}\label{eq123}
\left\{
\begin{array}{ll}
v_t+ f(v)_x=g(v),\qquad x\geq1,\quad t\geq0,\\
v|_{t=0}=v_0(x)=(\varrho_0(x), \omega_0(x)).
\end{array}
\right.
\end{equation}

We first introduce some basic facts about the Riemann solution of the  Cauchy problem of the Euler system of \eqref{eq123},
 \begin{equation}\label{eq124}
\begin{aligned}
v_t+ f(v)_x=0,
\end{aligned}
\end{equation}
with the initial Riemann data
\begin{equation}\label{eq125}
v|_{t=0}=\left\{
\begin{array}{ll}
v_-=(\varrho_->0, \omega_-),\quad x<x_0,\quad x_0>1,\\
v_+=(\varrho_+>0, \omega_+),\quad  x>x_0,
\end{array}
\right.
\end{equation}
and the Riemann solution of the initial-boundary problem of \eqref{eq124} with the initial-boundary Riemann data
\begin{equation}\label{eq126}
\left\{
\begin{array}{ll}
v|_{t=0}=v_+=(\varrho_+>0, \omega_+),\quad x>1, \\ 
\omega|_{x=1}=0, 
\end{array}
\right.
\end{equation}
where $\varrho_{\pm}$ and $\omega_{\pm}$ are constants with $|\frac{\omega_{\pm}}{\varrho_{\pm}}|<+\infty$. 


The Riemann invariants of \eqref{eq124} are
\begin{equation}\label{eq127}
\begin{aligned}
w=\frac{\omega}{\varrho}+\log \varrho, \qquad z=\frac{\omega}{\varrho}-\log \varrho,
\end{aligned}
\end{equation}
and the eigenvalues are
\begin{equation}\label{eq128}
\begin{aligned}
\lambda_1=\frac{\omega}{\varrho}-1, \qquad \lambda_2=\frac{\omega}{\varrho}+1.
\end{aligned}
\end{equation}

The Rankine-Hugoniot condition for the jump in a weak solution to \eqref{eq124} is
\begin{equation}\label{eq129}
\begin{aligned}
\sigma(v-v_0)=f(v)-f(v_0),
\end{aligned}
\end{equation}
where $\sigma$ is the propagating speed of the discontinuity, $v_0=(\varrho_0, \omega_0)$ and $v=(\varrho,\omega)$ are the corresponding left state and right state respectively.
If a discontinuity satisfies the entropy condition
\begin{equation}\label{eq131}
\begin{aligned}
\sigma(\eta(v)-\eta(v_0))-(q(v)-q(v_0))\geq0,
\end{aligned}
\end{equation}
for any convex entropy pair $(\eta,q)$, we call this discontinuity is a shock.

For the problem \eqref{eq124}$-$\eqref{eq126}, the Riemann solutions generally contain two distinct types of rarefaction waves and shock waves, which labeled $1-$rarefaction or $2-$rarefaction waves and $1-$shock or $2-$shock waves, see \cite{CDL3}.

Here we give the following properties about the Riemann solution.

\vskip 0.1in
\begin{lemma} \label{1hs} (\cite{CDL3}) There exists a global entropy solution $(\varrho, \omega)$ for the Riemann problem \eqref{eq124}, \eqref{eq125}, which is a piecewise smooth function satisfying
\begin{equation}\label{eq132}
\left\{
\begin{array}{ll}
w(x, t)\equiv w(\varrho(x, t)>0,\omega(x,t))\leq max\{w(\varrho_-,\omega_-),w(\varrho_+,\omega_+)\},\\
z(x, t)\equiv z(\varrho(x, t)>0,\omega(x,t))\geq min\{z(\varrho_-,\omega_-),z(\varrho_+,\omega_+)\},
\end{array}
\right.
\end{equation}
and for the Riemann problem \eqref{eq124}, \eqref{eq126}, also there exists a global entropy solution $(\varrho, \omega)$, which is piecewise smooth function satisfying
\begin{equation}\label{eq133}
\left\{
\begin{array}{ll}
w(x, t)\equiv w(\varrho(x, t)>0,\omega(x,t))\leq max\{w(\varrho_+,\omega_+),-z(\varrho_+,\omega_+)\},\\
z(x, t)\equiv z(\varrho(x, t)>0,\omega(x,t))\geq min\{0,z(\varrho_+,\omega_+)\},
\end{array}
\right.
\end{equation}
where $w$ and $z$ are the Riemann invariants in \eqref{eq127}.
\end{lemma}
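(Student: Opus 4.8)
The plan is to reduce both assertions to the monotonicity of the Riemann invariants $w,z$ in \eqref{eq127} along elementary wave curves, exploiting the special structure of the isothermal system (constant sound speed, since $p(\varrho)=\varrho$). The system \eqref{eq124} is strictly hyperbolic with $\lambda_1=u-1<\lambda_2=u+1$ (writing $u=\omega/\varrho$) and each field is genuinely nonlinear, so the self-similar solution of \eqref{eq124},\eqref{eq125} consists of a $1$-wave joining $v_-$ to a constant intermediate state $v_*$, followed by a $2$-wave joining $v_*$ to $v_+$; each wave is a centered rarefaction fan or an entropy (Lax) shock, hence the solution is piecewise smooth and, the shocks being admissible for \eqref{eq131}, an entropy solution. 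Computing the eigenvectors shows $w$ is constant across $1$-rarefactions and $z$ across $2$-rarefactions; thus within any rarefaction fan the constant invariant is pinned to an endpoint value while the other varies monotonically between its endpoint values, so a fan lies in the wedge $\Sigma:=\{w\le W,\ z\ge Z\}$ whenever its endpoints do. Since shocks are jumps, the claim \eqref{eq132} reduces to showing that $v_-,v_+$ and the single state $v_*$ all lie in $\Sigma$ with $W=\max\{w_-,w_+\}$, $Z=\min\{z_-,z_+\}$.

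The heart of the matter is the behaviour of $w,z$ across admissible shocks. For a $1$-shock with left state $v_-$ and right state $v_*$ the entropy condition forces compression $\varrho_*>\varrho_-$, and the Rankine--Hugoniot relations \eqref{eq129} with $p=\varrho$ give $u_*-u_-=-(\varrho_*-\varrho_-)/\sqrt{\varrho_*\varrho_-}$. Setting $r=\varrho_*/\varrho_->1$ one finds $w_*-w_-=\log r-(\sqrt r-1/\sqrt r)=:h(r)$, where $h(1)=0$ and $r^{3/2}h'(r)=-\tfrac12(\sqrt r-1)^2\le0$, so $h(r)<0$ and $w_*<w_-$; likewise $z_*-z_-=-(\sqrt r-1/\sqrt r)-\log r<0$. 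Hence $w$ never increases across a $1$-wave (constant on rarefactions, strictly decreasing on shocks), so $w_*\le w_-\le W$. The parity symmetry of \eqref{eq124} under $\omega\mapsto-\omega$ with $x$ reflected (which exchanges the two families and sends $w\mapsto-z$, $z\mapsto-w$) shows by the same computation that $z$ never decreases across a $2$-wave, so $z_*\ge z_+\ge Z$. This places $v_*\in\Sigma$ and proves \eqref{eq132}. I note that for isothermal flow $w\to-\infty$ and $z\to+\infty$ only as $\varrho\to0$, so the forward $1$-curve through $v_-$ (a graph with $w\le w_-$) and the backward $2$-curve through $v_+$ (a graph with $z\ge z_+$) always meet at a positive-density state; this yields global solvability with no vacuum, the feature special to $\gamma=1$.

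For the initial-boundary problem \eqref{eq124},\eqref{eq126} I would use the method of images. Let $\hat v=(\varrho_+,-\omega_+)$ be the reflection of $v_+$ and solve the whole-line Riemann problem with left state $\hat v$, right state $v_+$, and the jump at $x=1$. The parity symmetry above together with uniqueness forces the solution to be odd in $\omega$ about $x=1$, so $\omega|_{x=1}\equiv0$ and its restriction to $x\ge1$ solves \eqref{eq124},\eqref{eq126}. By the same symmetry the intermediate state is the wall state $v_*$ with $u_*=0$, i.e. $w_*=-z_*=\log\varrho_*$; since $\lambda_2(v_*)=1>0>\lambda_1(v_*)=-1$, the $1$-wave escapes into $x<1$ and only the single $2$-wave from $v_*$ to $v_+$ survives in $x\ge1$. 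Using the $2$-wave monotonicity established above, $w$ attains its extrema at $v_*,v_+$, so $w\le\max\{w_+,w_*\}=\max\{w_+,-z_*\}\le\max\{w_+,-z_+\}$ from $z_*\ge z_+$, while $z\ge\min\{z_*,z_+\}=z_+\ge\min\{0,z_+\}$, the threshold $0$ being the value of $z$ at a zero-velocity wall state of unit density. This is precisely \eqref{eq133}.

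The step I expect to be most delicate is the shock-curve monotonicity: everything downstream hinges on the sign of $h(r)$—equivalently, on the fact that both Riemann invariants strictly decrease across an admissible $1$-shock—and on selecting the correct branch of the Hugoniot locus through \eqref{eq131}. The remaining care lies in the bookkeeping for the boundary problem, namely verifying that the reflected Riemann solution genuinely enforces $\omega=0$ at the wall and that none of the $1$-wave leaks into $x\ge1$; this is governed by the signs of the characteristic speeds at the zero-velocity wall state and is straightforward once that state is identified.
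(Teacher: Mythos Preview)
The paper does not prove this lemma; it is quoted from \cite{CDL3} without argument. Your proof is correct and is essentially the classical one that \cite{CDL3} contains: you compute the Hugoniot locus explicitly for $p(\varrho)=\varrho$, show via the function $h(r)=\log r-(\sqrt r-1/\sqrt r)$ that both Riemann invariants strictly decrease across an admissible $1$-shock, invoke the $(\omega,x)\mapsto(-\omega,-x)$ symmetry to transfer this to $2$-waves, and combine with constancy of the appropriate invariant along rarefactions to place every state of the self-similar solution in the wedge $\{w\le W,\ z\ge Z\}$. For the wall problem your reflection argument is also the standard device: the mirrored Riemann problem has, by symmetry and uniqueness, a zero-velocity intermediate state $v_*$ with $w_*=-z_*=\log\varrho_*$, the $1$-wave has nonpositive speeds (either $\lambda_1(v_*)=-1$ bounds the fan or $\sigma_1=-\sigma_2<0$ for the shock), and the bounds \eqref{eq133} follow from $z_*\ge z_+$ exactly as you write. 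There is nothing to correct.
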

\vskip 0.1in

If the initial Riemann data lies in the regions $$\Theta=\{(\varrho,\omega):w\leq\max( w_0,z_0), z\geq z_0\},$$ then the corresponding Riemann solution $(\varrho(x,t),\omega(x,t))$ of the Riemann problem \eqref{eq124} with \eqref{eq125} or \eqref{eq126} also lies in $\Theta$ which is called invariant regions. And the integral average lemma can be listed as follows.

\vskip 0.1in

\begin{lemma} \label{2hs} (\cite{DCL1}) If $(\varrho, \omega):(a,b)\rightarrow\Theta$, then
\begin{equation}\label{eq134}
\begin{aligned}
\big(\frac{1}{b-a}\int^b_a\varrho(x)dx, \frac{1}{b-a}\int^b_a\omega(x)dx\big)\in\Theta.
\end{aligned}
\end{equation}
\end{lemma}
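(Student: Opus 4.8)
The plan is to prove Lemma \ref{2hs} by showing that $\Theta$ is a \emph{convex} subset of the half-plane $\{\varrho>0\}$, after which the conclusion follows from Jensen's inequality: the integral average $\big(\frac{1}{b-a}\int_a^b\varrho\,dx,\frac{1}{b-a}\int_a^b\omega\,dx\big)$ is nothing but the barycenter of the probability measure $\frac{1}{b-a}\,dx$ pushed forward along the curve $x\mapsto(\varrho(x),\omega(x))$, and the barycenter of a measure supported in a convex set lies in that set.

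First I would rewrite $\Theta$ in the $(\varrho,\omega)$ variables. Setting $\bar w:=\max(w_0,z_0)$ and recalling from \eqref{eq127} that $w=\frac{\omega}{\varrho}+\log\varrho$ and $z=\frac{\omega}{\varrho}-\log\varrho$ are each strictly increasing in $\omega$ at every fixed $\varrho>0$, the two defining inequalities $w\le\bar w$ and $z\ge z_0$ can be solved for $\omega$ to give
\begin{equation}\label{eqthetaref}
\Theta=\{(\varrho,\omega):\ \varrho>0,\ \chi(\varrho)\le\omega\le\phi(\varrho)\},
\end{equation}
where $\chi(\varrho):=\varrho(z_0+\log\varrho)$ and $\phi(\varrho):=\varrho(\bar w-\log\varrho)$. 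The consistency requirement $\chi(\varrho)\le\phi(\varrho)$ forces $\varrho\le e^{(\bar w-z_0)/2}$, so $\Theta$ is in fact bounded; consequently $\varrho$ and $\omega$ are bounded along the curve and every integral average in the statement is finite and well defined.

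The crux is the convexity of $\Theta$, and the point I would stress is that it is \emph{not} visible at the level of the Riemann invariants themselves: a direct computation shows that the Hessians of $w$ and of $z$, viewed as functions of $(\varrho,\omega)$, both have determinant $-\varrho^{-4}<0$ and are therefore indefinite, so neither $w$ nor $z$ is convex or concave. Convexity only emerges after the elimination of $\omega$ in \eqref{eqthetaref}. Since $\chi''(\varrho)=1/\varrho>0$, the lower boundary $\chi$ is convex and its epigraph $\{\omega\ge\chi(\varrho)\}$ is convex; since $\phi''(\varrho)=-1/\varrho<0$, the upper boundary $\phi$ is concave and its hypograph $\{\omega\le\phi(\varrho)\}$ is convex. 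Intersecting these two convex sets with the convex half-plane $\{\varrho>0\}$, the representation \eqref{eqthetaref} displays $\Theta$ as an intersection of convex sets, hence convex.

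Finally I would conclude by Jensen's inequality. Writing $\bar\varrho:=\frac{1}{b-a}\int_a^b\varrho\,dx$ and $\bar\omega:=\frac{1}{b-a}\int_a^b\omega\,dx$, positivity $\varrho(x)>0$ gives $\bar\varrho>0$. Averaging the pointwise bound $\omega(x)\ge\chi(\varrho(x))$ and using convexity of $\chi$ yields $\bar\omega\ge\frac{1}{b-a}\int_a^b\chi(\varrho)\,dx\ge\chi(\bar\varrho)$, i.e.\ $z(\bar\varrho,\bar\omega)\ge z_0$; averaging $\omega(x)\le\phi(\varrho(x))$ and using concavity of $\phi$ yields $\bar\omega\le\frac{1}{b-a}\int_a^b\phi(\varrho)\,dx\le\phi(\bar\varrho)$, i.e.\ $w(\bar\varrho,\bar\omega)\le\bar w$. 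Together with $\bar\varrho>0$ these are exactly the conditions in \eqref{eqthetaref}, so $(\bar\varrho,\bar\omega)\in\Theta$, which is the assertion. The only delicate step in the whole argument is the convexity claim: the natural temptation is to compute with $w$ and $z$ directly, where the indefinite Hessians obscure the structure, and the resolution is to solve the inequalities for $\omega$ as a function of $\varrho$ alone, turning the problem into the convexity/concavity of two one-variable functions.
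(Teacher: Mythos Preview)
Your argument is correct. The paper does not give its own proof of this lemma; it merely records the statement and cites \cite{DCL1}. What you have written is precisely the standard argument found there: one observes that the invariant region $\Theta$, although defined through the Riemann invariants $w,z$, is convex in the \emph{conserved} variables $(\varrho,\omega)$ because the level curves $\{w=\text{const}\}$ and $\{z=\text{const}\}$ are graphs of, respectively, a concave and a convex function of $\varrho$; the cell average then stays in $\Theta$ by Jensen. Your emphasis that the Hessians of $w$ and $z$ are indefinite and that the convexity only becomes visible after solving for $\omega$ is a helpful clarification of why this reformulation is the right move. One small comment: you might make explicit that the hypothesis $(\varrho,\omega):(a,b)\to\Theta$ tacitly includes measurability, which together with the boundedness of $\Theta$ you established guarantees the averages are well defined; otherwise the proof is complete and matches the cited source.
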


\vskip 0.1in

\vskip 0.1in

\begin{lemma} \label{6hs} (\cite{CDL3}) Suppose that $v(x,t)=(\varrho(x,t),\omega(x,t))$ is a Riemann solution with central point $(\frac{l}{2},0)$ on the rectangle: $0<x<l$, $0\leq t<h$. Then
\begin{equation}\label{eq13611}
\begin{aligned}
\displaystyle\int_{0}^l|v(x,t)-v(x,h-0)|^2dx&\leq C\int_{0}^l|\varrho(x,t)-\varrho(x,h-0)|^2dx\\
&\leq Cl\sum|\epsilon(\varrho(x,h-0))|^2,
\end{aligned}
\end{equation}
where $\epsilon(\varrho(x,h-0))$ denotes the jump strength of $\varrho(x,h-0)$ across the elementary wave on $t=h$, $C$ only depends on the upper bound of $\varrho(x,t)$ and $|\frac{\omega(x,t)}{\varrho(x,t)}|$, the summation is taken over all jump strengths in $\varrho(x,h-0)$ across elementary waves.
\end{lemma}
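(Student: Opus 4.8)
The plan is to exploit the self-similarity of the Riemann solution centered at $(l/2,0)$. Writing $v(x,t)=V(\xi)$ with $\xi=(x-l/2)/t$ and $V=(R,W)$ the self-similar profile, both $v(x,t)$ and $v(x,h-0)=V((x-l/2)/h)$ are the \emph{same} profile sampled at two rescaled arguments $\xi_t=(x-l/2)/t$ and $\xi_h=(x-l/2)/h$. By \eqref{eq128} and the hypothesis that $|\omega/\varrho|$ is bounded, the wave speeds obey $|\lambda_1|,|\lambda_2|\le\Lambda<\infty$, so $V$ is constant (equal to the left or right data) outside the cone $|\xi|\le\Lambda$ and, for the $2\times 2$ isothermal system, consists of exactly a $1$-wave and a $2$-wave (each a shock or a rarefaction fan). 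The first step is localization: $v(x,t)-v(x,h-0)$ vanishes unless $\xi_t$ and $\xi_h$ lie on opposite sides of some elementary wave. For the $k$-th wave (speed $\sigma_k$, or fan $[\sigma_k^-,\sigma_k^+]$) the set of such $x$ is a strip on which $x-l/2$ lies between $\sigma_k t$ and $\sigma_k h$ (up to the fan width), of Lebesgue measure at most $|\sigma_k|(h-t)+O(\Lambda h)\le C\Lambda h$. Since the Riemann solution is required to stay inside the rectangle we have the CFL relation $\Lambda h\le l/2$, so each strip has measure $\le Cl$.

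For the second inequality I would estimate each wave separately. On the strip of wave $k$ the density difference $|R(\xi_t)-R(\xi_h)|$ is bounded by the total variation of $R$ across that wave, i.e. by $|\epsilon_k|:=|\epsilon(\varrho_k(x,h-0))|$ — for a shock this is the exact jump governed by \eqref{eq129}, and for a rarefaction it is the total change across the fan. Hence the contribution of wave $k$ to $\int_0^l|\varrho(x,t)-\varrho(x,h-0)|^2\,dx$ is at most $|\epsilon_k|^2$ times the strip measure $\le Cl|\epsilon_k|^2$, and summing over the (at most two) elementary waves gives the right-hand bound $Cl\sum|\epsilon_k|^2$.

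For the first inequality the key fact is that \emph{across a single elementary wave} the momentum increment is controlled by the density increment with a bounded factor. Across a shock the first component of the Rankine--Hugoniot relation \eqref{eq129} reads $\sigma(\varrho-\varrho_0)=\omega-\omega_0$, so $|\Delta\omega|=|\sigma|\,|\Delta\varrho|\le\Lambda|\Delta\varrho|$; across a rarefaction one integrates along the wave curve, where one Riemann invariant from \eqref{eq127} is constant and $d\omega=\lambda_i\,d\varrho$ with $|\lambda_i|\le\Lambda$, giving the same bound. Therefore, on any strip where $\xi_t,\xi_h$ straddle exactly one wave, $|v(x,t)-v(x,h-0)|^2=|\Delta\varrho|^2+|\Delta\omega|^2\le(1+\Lambda^2)\,|\varrho(x,t)-\varrho(x,h-0)|^2$ pointwise; integrating this over the single-wave strips yields the middle inequality there. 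Independently, the overall upper bound $\int_0^l|v-v'|^2\,dx\le Cl\sum|\epsilon_k|^2$ is robust: on single-wave strips $|v-v'|^2\le(1+\Lambda^2)|\epsilon_k|^2$, and everywhere else $v-v'$ equals a sum of the two individual wave jumps, each bounded through \eqref{eq129} by $C|\epsilon_k|$.

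The main obstacle is the regime where a sample point straddles \emph{both} families at once — which, for the $2\times2$ system, can occur only when $\sigma_1,\sigma_2$ share a sign and $t$ is small enough that the strips overlap, namely on $\{x-l/2\in(\sigma_2 t,\sigma_1 h)\}$. There $v(x,t)-v(x,h-0)=v_+-v_-$ is the total jump across both waves, and a near-cancellation $\Delta_1\varrho\approx-\Delta_2\varrho$ in the density is \emph{not} matched in $\omega$, since $\Delta_1\omega+\Delta_2\omega=\sigma_1\Delta_1\varrho+\sigma_2\Delta_2\varrho$ with $\sigma_2-\sigma_1$ bounded below by the family separation $\lambda_2-\lambda_1=2$ from \eqref{eq128}. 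Thus the clean pointwise comparison degrades precisely on this overlap, and this is the delicate point requiring care. I would control it by the finite-speed estimate already established: the overlap has measure $\le|\sigma_1|(h-t)\le Cl$, and on it $|v_+-v_-|^2\le C(|\epsilon_1|^2+|\epsilon_2|^2)$, so its contribution is absorbed into $Cl\sum|\epsilon_k|^2$ directly. The comparison $\int_0^l|v-v'|^2\,dx\le C\int_0^l|\varrho-\varrho'|^2\,dx$ is then closed in the integrated form needed for the later $H^{-1}$ compactness argument, the constant $C$ depending only on the upper bounds of $\varrho$ and $|\omega/\varrho|$ exactly as asserted.
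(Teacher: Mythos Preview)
The paper does not supply its own proof of this lemma; it is quoted from \cite{CDL3}. Your self-similar strip decomposition is the standard approach and matches what one finds there, and your argument for the second inequality $\int_0^l|\varrho(x,t)-\varrho(x,h-0)|^2\,dx\le Cl\sum|\epsilon_k|^2$ is correct, as is the pointwise bound $|\Delta\omega|\le\Lambda|\Delta\varrho|$ across a single wave via \eqref{eq129} or the rarefaction curves.

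The gap is precisely where you flag it, and your proposed fix does not actually close it. On the two-wave overlap you bound the contribution by $Cl\sum|\epsilon_k|^2$; this yields the \emph{composite} inequality $\int|v-v'|^2\le Cl\sum|\epsilon_k|^2$ but not the intermediate one $\int|v-v'|^2\le C\int|\varrho-\varrho'|^2$. In the cancellation regime $\epsilon_1\approx-\epsilon_2$ the overlap contributes essentially nothing to $\int|\varrho-\varrho'|^2$, so you cannot dominate the overlap piece of $\int|\omega-\omega'|^2$ by appealing to the right-hand side of the chain. Saying the comparison ``is then closed in the integrated form'' glosses over this.

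The repair is to use the \emph{non}-overlap strips. With (say) two shocks of speeds $0<\sigma_1<\sigma_2$ and overlap $[\sigma_2 t,\sigma_1 h]$, the single-wave strips have lengths $(\sigma_2-\sigma_1)t$ and $(\sigma_2-\sigma_1)h$, strictly positive since admissibility forces $\sigma_1<\sigma_2$. They contribute $(\sigma_2-\sigma_1)(t\epsilon_1^2+h\epsilon_2^2)$ to the $\varrho$-integral. On the overlap $|\omega_+-\omega_-|=|\sigma_1\epsilon_1+\sigma_2\epsilon_2|$, and in the worst case $\epsilon_2=-\epsilon_1$ this equals $(\sigma_2-\sigma_1)|\epsilon_1|$, so the overlap $\omega$-contribution is at most $(\sigma_1 h-\sigma_2 t)(\sigma_2-\sigma_1)^2\epsilon_1^2$. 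The factor $(\sigma_2-\sigma_1)$ is common to both integrals and cancels in the ratio, leaving a bound $\le C\Lambda^2$ depending only on $\Lambda=\sup(|\omega/\varrho|+1)$. That is the missing computation; once done, the first inequality follows with the asserted dependence of $C$.
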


\vskip 0.1in


%


\vskip 0.1in

\begin{lemma} \label{3hs} 
 If $g(x)$ is a piecewise continuous function defined on some interval $[a,a+l]$, it consists of constant left state $g_l$ with length $l_1$, intermediate state $g_m$ with length $l_2$, and right state $g_r$ with length $l_3$, $l_1+l_2+l_3= l$, then
\begin{equation}\label{eq136}
\begin{aligned}
\int^{a+l}_{a}|g(x)-\bar{g}|^2dx= l\big[\frac{l_1}{l}\frac{l_3}{l}(g_r-g_l)^2+\frac{l_2}{l}\frac{l_3}{l}(g_r-g_m)^2+\frac{l_1}{l}\frac{l_2}{l}(g_m-g_l)^2\big],
\end{aligned}
\end{equation}
where $$\bar{g}=\frac{1}{l}\int_a^{a+l}g(x)dx.$$
\end{lemma}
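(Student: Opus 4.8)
The final statement, Lemma \ref{3hs}, is the elementary variance identity for a function taking three constant values, so the plan is a direct algebraic verification that does not invoke any of the PDE or compensated-compactness machinery. Since $g$ equals $g_l$ on a set of length $l_1$, $g_m$ on a set of length $l_2$, and $g_r$ on a set of length $l_3$, the average is
\[
\bar g=\frac{l_1 g_l+l_2 g_m+l_3 g_r}{l},
\]
and, the integrand being constant on each of the three pieces, the left-hand side of \eqref{eq136} collapses to the finite sum
\[
\int_a^{a+l}|g(x)-\bar g|^2\,dx=l_1(g_l-\bar g)^2+l_2(g_m-\bar g)^2+l_3(g_r-\bar g)^2 .
\]
The lemma is thus exactly the claim that this quantity equals the stated weighted sum of squared pairwise differences.

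The cleanest route is to recognize this as the standard representation of a variance through pairwise gaps. Setting $p_i=l_i/l$ (so that $p_1+p_2+p_3=1$) and letting $x_i$ range over $\{g_l,g_m,g_r\}$ with mean $\mu=\sum_i p_i x_i=\bar g$, one has the identity $\sum_i p_i(x_i-\mu)^2=\tfrac12\sum_{i,j}p_ip_j(x_i-x_j)^2$, which follows at once by expanding both sides to the common value $\sum_i p_ix_i^2-\mu^2$. Multiplying by $l$ and discarding the vanishing diagonal terms, the double sum reduces to the three distinct unordered pairs $(g_l,g_m)$, $(g_m,g_r)$, $(g_l,g_r)$, each counted twice so as to cancel the factor $\tfrac12$; this yields precisely
\[
l_1(g_l-\bar g)^2+l_2(g_m-\bar g)^2+l_3(g_r-\bar g)^2=\frac{l_1l_2}{l}(g_m-g_l)^2+\frac{l_2l_3}{l}(g_r-g_m)^2+\frac{l_1l_3}{l}(g_r-g_l)^2,
\]
which is \eqref{eq136}.

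Should a self-contained computation be preferred, I would instead substitute the explicit $\bar g$ into the finite sum and clear the denominator $l$. Writing each deviation as a combination of pairwise gaps, e.g.\ $g_l-\bar g=\frac{l_2(g_l-g_m)+l_3(g_l-g_r)}{l}$, and expanding the three squares, the single-state square contributions cancel and only the three pairwise squares $(g_r-g_l)^2$, $(g_r-g_m)^2$, $(g_m-g_l)^2$ survive, with coefficients $l_1l_3/l$, $l_2l_3/l$, $l_1l_2/l$ respectively. There is no genuine obstacle in either approach; the only point needing care is the bookkeeping of the cross terms in the direct expansion, which the variance identity sidesteps. Finally, I note that the result uses nothing about the order of the three states nor any relation among $g_l,g_m,g_r$, so \eqref{eq136} holds verbatim for arbitrary reals $g_l,g_m,g_r$ and nonnegative lengths summing to $l$.
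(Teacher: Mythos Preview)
Your proof is correct. The paper takes exactly your second route: it writes each deviation as a combination of pairwise gaps, e.g.\ $g_l-\bar g=\frac{l_2(g_l-g_m)+l_3(g_l-g_r)}{l}$, expands the three squares, and collects terms so that each coefficient factors as $\frac{l_il_j(l_1+l_2+l_3)}{l^2}=\frac{l_il_j}{l}$. Your primary route via the probabilistic variance identity $\sum_i p_i(x_i-\mu)^2=\tfrac12\sum_{i,j}p_ip_j(x_i-x_j)^2$ is a cleaner shortcut that bypasses the cross-term bookkeeping the paper carries out explicitly; it also makes transparent why the result is symmetric in the three states and why no ordering or distinctness assumption is needed.
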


\begin{proof} 
Notice that $\bar{g}=\frac{l_1g_l+l_2g_m+l_3g_r}{l}$, let 
$$g_l=\frac{l_1g_l+l_2g_l+l_3g_l}{l}, \ g_m=\frac{l_1g_m+l_2g_m+l_3g_m}{l}, \ g_r=\frac{l_1g_r+l_2g_r+l_3g_r}{l}.$$
Then we have 
\begin{equation}\label{eq11336}
\begin{aligned}
&\int^{a+l}_{a}|g(x)-\bar{g}|^2dx=(g_l-\bar{g})^2l_1+(g_m-\bar{g})^2l_2+(g_r-\bar{g})^2l_3\\
&=\big(\frac{(g_l-g_m)l_2+(g_l-g_r)l_3}{l}\big)^2l_1+\big(\frac{(g_m-g_l)l_1+(g_m-g_r)l_3}{l}\big)^2l_2\\
&\qquad \quad+\big(\frac{(g_r-g_l)l_1+(g_r-g_m)l_2}{l}\big)^2l_3\\
&=(\frac{l_1l_2^2}{l^2}+\frac{l_2l_1^2}{l^2}+\frac{l_1l_2l_3}{l^2})(g_l-g_m)^2+(\frac{l_1l_3^2}{l^2}+\frac{l_3l_1^2}{l^2}+\frac{l_1l_2l_3}{l^2})(g_l-g_r)^2\\
&\qquad \quad+(\frac{l_2l_3^2}{l^2}+\frac{l_3l_2^2}{l^2}+\frac{l_1l_2l_3}{l^2})(g_m-g_r)^2,
\end{aligned}
\end{equation}
which is equivalent to \eqref{eq136}. Then Lemma \ref{3hs} is proved.
\end{proof}

\vskip 0.1in




%
%

On the other hand, Huang-Wang established a compactness framework of approximate solution for isothermal conditions in \cite{HW}, as follows:

\vskip 0.1in

\begin{theorem}\label{theorem0} Let $(\varrho^l(x,t),\omega^l(x,t))$ be a sequence and satisfy the conditions:
$$(i) \qquad 0\leq\varrho^l(x,t)\leq C,\quad |\omega^l(x,t)|\leq\varrho^l(x,t)(C+|\log\varrho^l(x,t)|),\qquad a.e. \qquad\quad $$ for some constant $C$.

$(ii)$ The sequence of entropy dissipation measures $$\eta_t(\varrho^l,\omega^l)+q_x(\varrho^l,\omega^l)$$ is compact in $H_{loc}^{-1}(\mathbb{R}_+\times\mathbb{R}_+)$ for any 
fixed $\xi\in (-1,1)$ as $l$ varies, where 
\begin{equation}\label{eq139}
\begin{aligned}
\eta=\varrho^{\frac{1}{1-\xi^2}}e^{\frac{\xi}{1-\xi^2}\frac{\omega}{\varrho}}, \quad q=(\frac{\omega}{\varrho}+\xi)\varrho^{\frac{1}{1-\xi^2}}e^{\frac{\xi}{1-\xi^2}\frac{\omega}{\varrho}}=(\frac{\omega}{\varrho}+\xi)\eta.
\end{aligned}
\end{equation}
 Then there exists $(\varrho,\omega)$ and a subsequence (still denoted) $(\varrho^l,\omega^l)$ such that
\begin{equation}\label{eq137}
\begin{array}{ll}
(\varrho^l,\omega^l)\rightarrow(\varrho,\omega)  \quad \mbox{as} \quad l\rightarrow0   \quad  \mbox{a.e.} \quad \mbox{in  } L_{loc}^p(\mathbb{R}_+\times\mathbb{R}_+),\quad p\in [1,+\infty),
\end{array}
\end{equation}
and
\begin{equation}\label{eq138}
\begin{array}{ll}
0\leq\varrho\leq C,\quad |\omega(x,t)|\leq\varrho(x,t)(C+|\log\varrho(x,t)|).
\end{array}
\end{equation}
\end{theorem}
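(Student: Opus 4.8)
The plan is to prove Theorem \ref{theorem0} by the compensated compactness framework of Murat \cite{M12} and Tartar \cite{T11}, in the spirit of DiPerna \cite{D} and Ding--Chen--Luo \cite{DCL1}, but with the polytropic entropy kernel replaced by the exponential entropies \eqref{eq139} that are natural for the isothermal law $p(\varrho)=\varrho$. By hypothesis (i) the pair $(\varrho^l,\omega^l)$ takes values in the fixed compact set $K=\{(\varrho,\omega):0\le\varrho\le C,\ |\omega|\le\varrho(C+|\log\varrho|)\}$ (with $\varrho(C+|\log\varrho|)$ read as $0$ at $\varrho=0$), so along a subsequence there is a family of Young measures $\{\nu_{x,t}\}$, probability measures on $K$, with $\phi(\varrho^l,\omega^l)\rightharpoonup\langle\nu_{x,t},\phi\rangle$ weak-$*$ in $L^\infty_{loc}$ for every $\phi\in C(K)$. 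A preliminary point is that each $(\eta_\xi,q_\xi)$ in \eqref{eq139}, defined a priori only for $\varrho>0$, extends continuously to all of $K$: on $K$ one has $|\omega/\varrho|\le C+|\log\varrho|$, hence $|\eta_\xi|\le C'\varrho^{1/(1+|\xi|)}\to0$ and $|q_\xi|\to0$ as $\varrho\to0$, so $\eta_\xi$ and $q_\xi$ extend by $0$ across the vacuum. In particular $\langle\nu_{x,t},\eta_\xi\rangle$ and $\langle\nu_{x,t},q_\xi\rangle$, as well as $\langle\nu_{x,t},\eta_\xi\eta_\zeta\rangle$, are well defined.

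Next I would feed hypothesis (ii) into the div--curl lemma. For fixed $\xi,\zeta\in(-1,1)$ the sequences $\eta_\xi(\varrho^l,\omega^l)_t+q_\xi(\varrho^l,\omega^l)_x$ and $\eta_\zeta(\varrho^l,\omega^l)_t+q_\zeta(\varrho^l,\omega^l)_x$ are compact in $H^{-1}_{loc}$ and bounded in $L^\infty_{loc}$, so the div--curl lemma yields the Tartar commutation identity
\begin{equation*}
\langle\nu_{x,t},\,\eta_\xi q_\zeta-\eta_\zeta q_\xi\rangle=\langle\nu_{x,t},\eta_\xi\rangle\langle\nu_{x,t},q_\zeta\rangle-\langle\nu_{x,t},\eta_\zeta\rangle\langle\nu_{x,t},q_\xi\rangle
\end{equation*}
for a.e.\ $(x,t)$ and all $\xi,\zeta$. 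Since $q_\xi=(\omega/\varrho+\xi)\eta_\xi$, the left bracket collapses to $(\zeta-\xi)\eta_\xi\eta_\zeta$, so with $P(\xi):=\langle\nu_{x,t},\eta_\xi\rangle$ and $Q(\xi):=\langle\nu_{x,t},q_\xi\rangle$ this reads $(\zeta-\xi)\langle\nu_{x,t},\eta_\xi\eta_\zeta\rangle=P(\xi)Q(\zeta)-P(\zeta)Q(\xi)$; dividing by $\zeta-\xi$ and letting $\zeta\to\xi$ gives $\langle\nu_{x,t},\eta_\xi^2\rangle=P(\xi)Q'(\xi)-P'(\xi)Q(\xi)$, while Jensen's inequality gives $\langle\nu_{x,t},\eta_\xi^2\rangle\ge P(\xi)^2$, with equality precisely when $\eta_\xi$ is $\nu_{x,t}$-a.e.\ constant.

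The decisive and most delicate step is then the reduction of $\nu_{x,t}$ to a Dirac mass. In the Riemann coordinates \eqref{eq127} one computes $\eta_\xi=\exp\!\big(\tfrac{w}{2(1-\xi)}-\tfrac{z}{2(1+\xi)}\big)$, so as $\xi\to1^-$ (resp.\ $\xi\to-1^+$) the weight $\eta_\xi$ becomes concentrated, by Laplace-type asymptotics, on the part of $\operatorname{supp}\,\nu_{x,t}$ where $w$ is largest (resp.\ where $z$ is smallest). Examining the limiting form of the commutation identity in these two regimes — the isothermal counterpart of the corner arguments used in the polytropic case — one shows that $\operatorname{supp}\,\nu_{x,t}$ must lie on a single rarefaction line $\{w=\mathrm{const}\}$ or $\{z=\mathrm{const}\}$; feeding this back into the identity, now a one-parameter problem on that line, collapses the support to a point. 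The remaining possibility is that $\nu_{x,t}$ is carried by the vacuum, but in the $(\varrho,\omega)$ variables $K\cap\{\varrho=0\}=\{(0,0)\}$, so again $\nu_{x,t}$ is a Dirac mass. Hence $\nu_{x,t}=\delta_{(\varrho(x,t),\omega(x,t))}$ for a.e.\ $(x,t)$. Making these concentration arguments rigorous — controlling the exponential growth of $\eta_\xi$ near the vacuum uniformly in $\xi$ and disentangling the two limiting families — is where the bulk of the effort lies, and is the main obstacle I anticipate.

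Finally, once $\nu_{x,t}$ is a Dirac mass, testing with $\phi(\varrho,\omega)=\varrho^2$ and $\phi(\varrho,\omega)=\omega^2$ shows that the $L^2_{loc}$ defect measures of $\varrho^l$ and $\omega^l$ vanish, so $(\varrho^l,\omega^l)\to(\varrho,\omega)$ a.e.\ along the subsequence; the uniform bound on $K$ and dominated convergence then upgrade this to convergence in $L^p_{loc}(\mathbb{R}_+\times\mathbb{R}_+)$ for every $p\in[1,\infty)$, which is \eqref{eq137}. The bound \eqref{eq138} passes to the limit because both $\{0\le\varrho\le C\}$ and $\{|\omega|\le\varrho(C+|\log\varrho|)\}$ are closed, hence stable under a.e.\ limits. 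This completes the argument.
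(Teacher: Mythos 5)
First, a point of context: the paper does not prove Theorem \ref{theorem0} at all --- it is quoted from Huang--Wang \cite{HW} and used as a black box --- so the only meaningful comparison is with the proof in \cite{HW}. Your overall strategy is the right one and is in fact theirs: Young measures supported in the compact set $K$, the div--curl lemma applied to the exponential entropy pairs $(\eta_\xi,q_\xi)$, the collapse of the Tartar bracket to $(\zeta-\xi)\eta_\xi\eta_\zeta$ thanks to $q_\xi=(\tfrac{\omega}{\varrho}+\xi)\eta_\xi$, and the differentiated identity $\langle\nu,\eta_\xi^2\rangle=PQ'-P'Q$ combined with Jensen. The computations you do carry out are correct: the bound $\eta_\xi\le C'\varrho^{(1-|\xi|)/(1-\xi^2)}$ giving continuity of the entropies across the vacuum, and the Riemann-coordinate formula $\log\eta_\xi=\frac{w}{2(1-\xi)}-\frac{z}{2(1+\xi)}$.

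The gap is the reduction of $\nu_{x,t}$ to a Dirac mass, which is the entire mathematical content of the theorem and which you state only as a programme. Two specific points are unsubstantiated. (a) The assertion that the limits $\xi\to\pm1$ first force $\operatorname{supp}\nu_{x,t}$ onto a single rarefaction line $\{w=\mathrm{const}\}$ or $\{z=\mathrm{const}\}$ is not an evident consequence of the commutation identity and is not how the reduction is actually run in \cite{HW}, where one works directly with the asymptotics of $P(\xi)$, $Q(\xi)$ and $\langle\nu,\eta_\xi^2\rangle$ as $\xi\to\pm1$ to pin down simultaneously the $\nu$-essential supremum of $w$ and infimum of $z$. (b) The ``Laplace-type'' concentration argument is delicate precisely because the prefactor $e^{C|\xi|/(1-\xi^2)}$ in your vacuum estimate blows up as $\xi\to\pm1$, and because in Riemann coordinates the vacuum is the unbounded end $w-z\to-\infty$ of $K$, so the supremum of $w$ over $\operatorname{supp}\nu_{x,t}$ need not be attained away from the vacuum; separating the alternative ``Dirac at a nonvacuum state'' from ``measure partly carried by the vacuum'' is exactly where the effort in \cite{HW} is spent. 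You acknowledge this yourself, but as written the proposal establishes the framework and the easy identities, not the theorem.
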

\vskip 0.1in

We will prove the strong convergence of our approximate solutions by the above compactness theorem.

\section{Approximate Solutions}\label{section3}
\setcounter{section}{3}

\setcounter{equation}{0}

\noindent In this section, we use a shock capturing scheme of Lax-Friedrichs type developed in \cite{CL} to construct a sequence of approximate solutions $(\varrho^l,\omega^l)$ for the initial-boundary value problem \eqref{eq120} $-$ \eqref{eq1114}. 

Let $0<l<1$ and
\begin{equation}\label{eq148}
\begin{aligned}
h=\frac{l}{10(1+|\log l|)}
\end{aligned}
\end{equation}
 be the space and time mesh length respectively. We partition $(1,+\infty)$ into cells with the $j$-$th$ cell centred at $x_j=1+jl, j=0,1,2...$ and partition $\mathbb{R}_+$ by the sequence $t_i=ih, i\in Z_+$. Denote $v^l=(\varrho^l,\omega^l)$ as the approximate solutions satisfying the Courant-Friendrichs-Lewy (CFL) stability condition:
 \begin{equation}\label{eq149}
\begin{aligned}
\max\limits_{k=1,2}\sup\limits_{\varrho^l, \omega^l}|\lambda_k(\varrho^l,\omega^l)|<\frac{l}{h}.
\end{aligned}
\end{equation}

To ensure the (CFL) condition hold, 
 we use the cut-off technique to make the approximate density functions stay away from vacuum 
by $l^\beta$, $3\leq\beta\leq4$.


Let $\bar{v}_0=(\bar{\varrho}_0,\bar{\omega}_0),$
\begin{equation}\label{eq153}
\begin{aligned}
 \bar{\varrho}_0=\left\{
\begin{array}{ll}
l^{\beta},\quad x\ge\frac{1}{l},\\
\max\{\varrho_0,l^\beta\},\quad 1\le x<\frac{1}{l}; 
\end{array}
\right.
\qquad
 \bar{\omega}_0=\left\{
\begin{array}{ll}
0, \quad x\ge\frac{1}{l},\\
\omega_0, \quad 1\le x<\frac{1}{l}.
\end{array}
\right.
\end{aligned}
\end{equation}
We define the boundary conditions as 
\begin{equation}\label{eq154}
\underline{v}(x,0+0)=\left\{
\begin{array}{ll}
\displaystyle\bar{v}_j^0=\frac{1}{2l}\int_{x_{j-1}}^{x_{j+1}}\bar{v}_0dx, \quad x_{j-1}\leq x\leq x_{j+1},\ j\geq4 \ \mbox{even} ,\\[3mm]
\displaystyle\bar{v}_2^0=\frac{1}{3l}\int_{1}^{1+3l}\bar{v}_0dx, \quad 1\leq x\leq x_3.
\end{array}
\right.
\end{equation}
Then we solve the Riemann problem \eqref{eq125} in the region
\begin{equation}\label{eq155}
\begin{aligned}
R_j^1=\{(x,t),  x_j\leq x\leq x_{j+2}, 0\leq t<t_1, j\geq2 \ \mbox{integers}\},
\end{aligned}
\end{equation}
with the Riemann data
\begin{equation}\label{eq156}
\underline{v}^l|_{t=0}=\left\{
\begin{array}{ll}\bar{v}_j^0,\quad x<x_{j+1},\\
\bar{v}_{j+2}^0,\quad x>x_{j+1},\quad j=2,4...,
\end{array}
\right.
\end{equation}
and the initial-boundary Riemann problem \eqref{eq126} in
\begin{equation}\label{eq158}
\begin{aligned}
\{(x,t),  1\leq x\leq x_2, 0\leq t<t_1\},
\end{aligned}
\end{equation}
with the Riemann data:
\begin{equation}\label{eq160}
\begin{aligned}
\underline{v}^l|_{t=0}=\bar{v}_2^0,  \quad 1< x,
\quad \omega|_{x=1}=0 ,
\end{aligned}
\end{equation}
to obtain $\underline{v}^l(x,t)$ for $0\leq t<t_1$. Set
\begin{equation}\label{eq157}
\begin{aligned}
v^l(x,t)=\underline{v}^l(x,t)+g(\underline{v}^l(x,t))t,\  \  0<t<t_1,
\end{aligned}
\end{equation}
where $g((\underline{v}^l(x,t))=(g_1,g_2)((\underline{v}^l(x,t))$ was given in \eqref{eq122}.

Suppose that the approximate solutions $v^l(x,t)$ have been well defined for $0\leq t<t_i$, and set
\begin{equation}\label{eq161}
\begin{aligned}
 \bar{\varrho}^l=\ \mbox{max}\{\varrho^l,l^\beta\},\quad \bar{\omega}^l=\omega^l, \quad\ \mbox{for} \ \  t_{i-1}\leq t<t_i.
\end{aligned}
\end{equation}
Define $\underline{v}(x,t_i+0)$: 

$(i)$ when $i\geq1$ is odd, $j\geq3$ also is odd,
\begin{equation}\label{eq162}
\underline{v}(x,t_i+0)=\left\{
\begin{array}{ll}
\displaystyle\bar{v}_j^i=\frac{1}{2l}\int_{x_{j-1}}^{x_{j+1}}\bar{v}^l(x,t_i-0)dx, \quad x_{j-1}\leq x\leq x_{j+1},\\[3mm]
\displaystyle\bar{v}_1^i=\frac{1}{2l}\int_{1}^{1+2l}\bar{v}^l(x,t_i-0)dx, \quad 1\leq x\leq x_2;
\end{array}
\right.
\end{equation}

$(ii)$ when $i\geq2$ is even, $j\geq4$ also is even,
\begin{equation}\label{eq163}
\underline{v}(x,t_i+0)=\left\{
\begin{array}{ll}
\displaystyle\bar{v}_j^i=\frac{1}{2l}\int_{x_{j-1}}^{x_{j+1}}\bar{v}^l(x,t_i-0)dx, \quad x_{j-1}\leq x\leq x_{j+1},\\[3mm]
\displaystyle\bar{v}_2^i=\frac{1}{3l}\int_{1}^{1+3l}\bar{v}^l(x,t_i-0)dx, \quad 1\leq x\leq x_3.
\end{array}
\right.
\end{equation}
Then we solve the Riemann problem in the strip region
\begin{equation}\label{eq164}
\begin{aligned}
R_j^i=\{(x,t),  x_j\leq x\leq x_{j+2}, t_i\leq t<t_{i+1}\},
\end{aligned}
\end{equation}
with the Riemann data
\begin{equation}\label{eq165}
\underline{v}^l|_{t=t_i}=\left\{
\begin{array}{ll}\bar{v}_j^i,\quad x<x_{j+1},\\
\bar{v}_{j+2}^i,\quad x>x_{j+1},
\end{array}
\right.
\end{equation}
for $j\geq1$ odd when $i\geq1$ is odd and for $j\geq2$ even when $i\geq2$ is even,
and the initial-boundary Riemann problem \eqref{eq126} in
\begin{equation}\label{eq166}
\begin{aligned}
\{(x,t),  1\leq x\leq x_1, t_i\leq t<t_{i+1}\},
\end{aligned}
\end{equation}
when $i$ is odd with the Riemann data:
\begin{equation}\label{eq169}
\begin{aligned}
\underline{v}^l|_{t=t_i}=\bar{v}_1^i,  1< x\leq x_1,\quad \omega|_{x=1}=0 ;
\end{aligned}
\end{equation}
and in
\begin{equation}\label{eq168}
\begin{aligned}
\{(x,t),  1\leq x\leq x_2, t_i\leq t<t_{i+1}\},
\end{aligned}
\end{equation}
when $i$ is even with the Riemann data:
\begin{equation}\label{eq167}
\begin{aligned}
\underline{v}^l|_{t=t_i}=\bar{v}_2^i,  1< x\leq x_2,\quad \omega|_{x=1}=0 ,
\end{aligned}
\end{equation}
to obtain $\underline{v}^l(x,t)$ for $t_i\leq t<t_{i+1}$. Set
\begin{equation}\label{eq170}
\begin{aligned}
v^l(x,t)=\underline{v}^l(x,t)+g(\underline{v}^l(x,t))(t-t_i),
\end{aligned}
\end{equation}
for $t_i<t<t_{i+1}$.
Let $\bar{v}^l(x,t)=\big(\bar{\varrho}^l(x,t),\bar{\omega}^l(x,t)\big)$ be the approximate solutions, where
\begin{equation}\label{eq171}
\begin{aligned}
 \bar{\varrho}^l(x,t)=\ \mbox{max}\{\varrho^l(x,t),l^\beta\},\quad \bar{\omega}^l(x,t)=\omega^l(x,t).
\end{aligned}
\end{equation}

Denote
\begin{equation}\label{eq172}
\begin{array}{ll}
w^l(x,t)=w\big(v^l(x,t)\big),\quad \underline{w}^l(x,t)=w\big(\underline{v}^l(x,t)\big),\quad \bar{w}^l(x,t)=w\big(\bar{v}^l(x,t)\big),\\
z^l(x,t)=z\big(v^l(x,t)\big),\quad \underline{z}^l(x,t)=z\big(\underline{v}^l(x,t)\big),\quad \bar{z}^l(x,t)=z\big(\bar{v}^l(x,t)\big).
\end{array}
\end{equation}

We have
\vskip 0.1in

\begin{lemma}\label{31hs} Suppose that the initial data $(\varrho_0(x),\omega_0(x))$ satisfies
\begin{equation}\label{eq173}
\begin{aligned}
0\leq\varrho_0\leq C_0,\quad |\omega_0(x)|\leq\varrho_0(x)(C_0+|\log\varrho_0|),\ \ \ a.e.
\end{aligned}
\end{equation}
where $C_0>0$ is a constant. Then there exists some constant $l_0>0$ such that if $l\leq l_0$, 
there exists constant $C(T,M)>0$ 
such that
\begin{equation}\label{eq174}
\begin{aligned}
l^\beta\leq\bar{\varrho}^l(x,t)\leq C(T,M),\quad |\bar{\omega}^l(x,t)|\leq\bar{\varrho}^l(x,t)(C(T,M)+|\log\bar{\varrho}^l(x,t)|),
\end{aligned}
\end{equation}
hold for any $T>0$.
\end{lemma}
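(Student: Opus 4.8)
The plan is to establish the bounds \eqref{eq174} by an induction on the time level $i$, propagating uniform control of the Riemann invariants $\underline{w}^l, \underline{z}^l$ (equivalently of $w^l,z^l$) through the three operations that make up one step of the scheme: (a) the averaging in \eqref{eq162}--\eqref{eq163}, (b) solving the Riemann/initial-boundary Riemann problems in \eqref{eq164}--\eqref{eq168}, and (c) the source update \eqref{eq170}. The first observation is that the bounds in \eqref{eq173} on $(\varrho_0,\omega_0)$ translate into two-sided bounds on the Riemann invariants: since $w=\frac{\omega}{\varrho}+\log\varrho$ and $z=\frac{\omega}{\varrho}-\log\varrho$, the hypothesis $|\omega_0|\le\varrho_0(C_0+|\log\varrho_0|)$ gives $|w(v_0)|,|z(v_0)|\le C_0+2|\log\varrho_0|$, and combined with $\varrho_0\le C_0$ (and the cut-off $\bar\varrho_0\ge l^\beta$) one gets, after absorbing $|\log l^\beta|=\beta|\log l|$ into the eventual time-dependent constant, a starting bound of the form $w(\bar v_0)\le A_0$, $z(\bar v_0)\ge -A_0$ with $A_0$ depending on $C_0$ and (logarithmically) on $l$. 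This is exactly the shape needed to enter the invariant-region machinery.

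Next I would run the inductive step. Assume at time $t_i-0$ we have $w^l\le W_i$, $z^l\ge -Z_i$ on the relevant strip, hence (using $\varrho=e^{(w-z)/2}$, $\omega/\varrho=(w+z)/2$) an $L^\infty$ bound on $\bar\varrho^l$ and on $|\bar\omega^l/\bar\varrho^l|$ depending only on $W_i,Z_i$. For step (a): the averaged states $\bar v_j^i$ are integral averages of $\bar v^l(\cdot,t_i-0)$ over cells where this data takes values in the region $\Theta=\{w\le W_i,\ z\ge -Z_i\}$ (here one must check that replacing $\varrho^l$ by $\bar\varrho^l=\max\{\varrho^l,l^\beta\}$ only moves $w$ up and $z$ down by a controlled amount, or better, absorb the cut-off into the $\Theta$ used for $\bar v^l$); then Lemma \ref{2hs} gives $\bar v_j^i\in\Theta$, so the averaging does not increase $W_i$ or $Z_i$. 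For step (b): by Lemma \ref{1hs}, solving the Riemann problem \eqref{eq124},\eqref{eq125} with data in $\Theta$ keeps the solution in $\Theta$ ($w$ stays $\le\max$ of the two sides' $w$, $z$ stays $\ge\min$), and for the boundary cells the initial-boundary estimate \eqref{eq133} shows $w\le\max\{w_+,-z_+\}$ and $z\ge\min\{0,z_+\}$ — note $-z_+\le W_i$ as soon as $Z_i\le W_i$, and $0\le W_i$, $0\ge -Z_i$, so the boundary interaction also respects a slightly enlarged $\Theta$; the $\omega|_{x=1}=0$ condition is automatically compatible. Hence $\underline w^l\le W_i$, $\underline z^l\ge -Z_i$ for $t_i\le t<t_{i+1}$.

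The one genuinely new ingredient, and the main obstacle, is step (c): the source term. From \eqref{eq170}, $v^l=\underline v^l+g(\underline v^l)(t-t_i)$ with $g=(0,\ \frac{N-1}{x}\varrho-\frac{\varrho}{x^{N-1}}\int_1^x\varrho\,ds)^T$, so only $\omega$ changes, by $\delta\omega=g_2\,(t-t_i)$ with $0<t-t_i<h$. Thus $w$ and $z$ change by $\pm\delta\omega/\varrho = \pm g_2(t-t_i)/\varrho$. The point is to bound $|g_2|/\varrho = |\frac{N-1}{x} - \frac{1}{x^{N-1}}\int_1^x\varrho\,ds|$: the first term is $\le N-1$ since $x\ge1$, and the second is controlled by the conserved mass, $\int_1^x\varrho(s,t)\,ds\le \int_1^\infty \varrho\,ds = M/\omega_n$ (using \eqref{eq116} and that the scheme is conservative for $\varrho$, modulo the cut-off contribution which is $O(l^\beta\cdot x)$ and must be shown summable — this is where $\beta\ge3$ enters), again with $x^{N-1}\ge1$. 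Hence $|g_2/\varrho|\le C(M)$ uniformly, and one time step changes $w,z$ by at most $C(M)h$. Summing over the $i\le T/h$ steps up to time $T$ gives a total drift of at most $C(M)\,T$, so $W_i\le A_0 + C(M)T =: C(T,M)$ and likewise $Z_i\le A_0+C(M)T$ for all $i$ with $t_i\le T$. Translating back, $e^{-C(T,M)}\le\bar\varrho^l/(\text{the }\max\text{ with }l^\beta)$ — more precisely the cut-off forces $\bar\varrho^l\ge l^\beta$ directly, while the upper bound $\bar\varrho^l\le e^{(W_i+Z_i)/2}\le C(T,M)$ and $|\bar\omega^l|=\varrho^l|\omega^l/\varrho^l|=\varrho^l\frac{|w+z|}{2}\le\bar\varrho^l(C(T,M)+|\log\bar\varrho^l|)$ follow from the definition of $w,z$. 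Two technical points deserve care: first, that the estimate $\int_1^x\varrho^l\,ds\le C(M)$ holds for the \emph{approximate} (cut-off, source-perturbed) solutions and not just the exact conservative part — the source adds nothing to the $\varrho$-equation \eqref{eq120}$_1$, and the $\max\{\cdot,l^\beta\}$ truncation adds at most $l^\beta$ per cell over $O(1/l)$ cells near the ball plus a tail, kept bounded by choosing $3\le\beta\le4$ and restricting to finite $x$-range dictated by the test-function support; second, that the CFL condition \eqref{eq149} is maintained — with $\lambda_k=\omega/\varrho\pm1$ bounded by $C(T,M)+1$ and $h=l/(10(1+|\log l|))$, one needs $l^\beta$-cutoff to keep $|\omega/\varrho|$ from blowing up, and $C(T,M)+1<l/h=10(1+|\log l|)$ holds once $l\le l_0$ because the left side grows like $|\log l^\beta|\sim\beta|\log l|<10|\log l|$. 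Assembling these yields \eqref{eq174} with the stated dependence on $T$ and $M$.
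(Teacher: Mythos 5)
Your overall strategy coincides with the paper's: one-sided control of the Riemann invariants through the invariant region (Lemmas \ref{1hs} and \ref{2hs}) for the averaging and Riemann-solving steps, a uniform bound on $\frac{1}{x^{N-1}}\int_1^x\underline{\varrho}^l\,ds$ obtained from mass conservation plus a careful accounting of the extra mass injected by the cut-off (this is where $3\le\beta\le 4$ and the choice of $h$ in \eqref{eq148} enter, exactly as in \eqref{eq179a}--\eqref{eq181a}), and then an $O(h)$ drift of $w$ and $z$ per time step from the source term \eqref{eq170}, summed over the $T/h$ steps. Your treatment of the source ($\delta w=\delta z=g_2(t-t_i)/\varrho$ with $g_2/\varrho$ bounded by $N-1$ from above and by $-C(M)$ from below) and of the CFL condition also matches the paper.

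There is, however, one step that as written would defeat the purpose of the lemma. You derive the two-sided bound $|w(v_0)|,|z(v_0)|\le C_0+2|\log\varrho_0|$ and then propose to ``absorb $|\log l^\beta|=\beta|\log l|$ into the eventual constant,'' so that your starting bound $A_0$ depends logarithmically on $l$. But then $W_i\le A_0+C(M)T$ also depends on $|\log l|$, and $\bar{\varrho}^l\le e^{(W_i+Z_i)/2}$ blows up like a negative power of $l$ as $l\to 0$: the constant in \eqref{eq174} would not be uniform in the mesh parameter, and Theorem \ref{theorem0} could not be applied to the sequence. The two-sided bound is the wrong quantity to track. One only needs the one-sided bounds, and for these the logarithms cancel: $w=\frac{\omega_0}{\varrho_0}+\log\varrho_0\le C_0+|\log\varrho_0|+\log\varrho_0\le C_0+2\log_+C_0=:\alpha_0$, and similarly $z\ge-\alpha_0$, with $\alpha_0$ independent of $l$. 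The same cancellation shows the cut-off $\varrho\mapsto\max\{\varrho,l^\beta\}$ (with $\omega$ unchanged) preserves $w\le\alpha$ and $z\ge-\alpha$: if $|\omega|\le\varrho(\alpha-\log\varrho)$ and $\varrho<l^\beta$, then since $s\mapsto s(\alpha-\log s)$ is increasing for small $s$ one gets $|\omega|/l^\beta\le\alpha-\log l^\beta$, hence $\omega/l^\beta+\log l^\beta\le\alpha$. This is precisely what the paper uses in \eqref{eq180}--\eqref{eq182} and \eqref{eq185}. With that correction your induction closes and yields \eqref{eq174} with $C=C(T,M)$ independent of $l$.
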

\vskip 0.1in

\begin{proof}

    First of all, 
    for the initial-boundary Riemann solution $\underline \varrho^l(x,t)$, we claim that       
\begin{equation}\label{eq176a}
\begin{aligned}
\int_1^{x_2}\underline{\varrho}^l(x,h-0)dx\le\int_1^{x_3 }\underline{\varrho}^l(x,0)dx.
\end{aligned}
\end{equation}    
Indeed, take four points $$A:(1,h),\ B:(x_2,h),\ C:(1,0), \ D:(x_3, 0)$$ to form a trapezoid in the $(x,t)$ plane, the lines $AD$ and $BC$ are $$AD:\ x(t)=1;\ \ BC:\ x(t)=-\frac{l}{h}t+x_3, \ \ t\in[0,h).$$
 Using the Divergent Theorem, R-H condition and CFL condition gives \eqref{eq176a}.  
Similarly, for $(i-1)h\le t<ih, $ $i+j=$ odd, 
 \begin{equation}\label{eq178a}
\begin{aligned}
\int_1^{x_j}\underline{\varrho}^l(x,ih-0)dx\le\int_1^{-\frac{l}{h}(t-(i-1)h)+x_{j+1}}\underline{\varrho}^l(x,t)dx\le\int_1^{x_{j+1} }\underline{\varrho}^l(x,(i-1)h+0)dx.
\end{aligned}
\end{equation}  
Now, we show that $\underline \varrho^l$ are uniformly bounded with respect to $l$. For $(i-1)h\le t<ih, $ $1\le x<\frac{1}{l}+il$, we derive   
\begin{align}\label{eq179a}
\begin{aligned}
\int_1^{\frac{1}{l}}\underline{\varrho}^l(x,t)dx&\le\int_1^{-\frac{l}{h}(t-ih)+\frac{1}{l}}\underline{\varrho}^l(x,t)dx\le\int_1^{\frac{1}{l}+l} \underline{\varrho}^l(x,(i-1)h+0)dx\\
&\le\int_1^{\frac{1}{l}+l} \underline{\varrho}^l(x,(i-1)h-0)dx+[\frac{1}{l}+l-1]l^\beta\\
&\le\int_1^{\frac{1}{l}+il} {\varrho}_0(x)dx+[\frac{i}{l}+l+2l+\cdots+il-i]l^\beta\\
&\le\int_1^{\frac{1}{l}+il} {\varrho}_0(x)dx+[\frac{i}{l}+\frac{(i+1)i}{2}l-i]l^\beta.
\end{aligned}
\end{align}     
Notice that \eqref{eq148} and $3\le\beta\le4$, there exists $l_0$ such that when $l\le l_0$,   
 \begin{align}\label{eq180a}
\begin{aligned}
\big[\frac{i}{l}+\frac{(i+1)i}{2}l-i\big]l^\beta=il^{\beta-1}+\frac{(i+1)i}{2}l^{\beta+1}-il^\beta< Tl^{\beta-\frac{5}{2}},
\end{aligned}
\end{align}   
for $i=\frac{T}{h}=10T(1+|\log l|)l^{-1}$.  
Thus, we obtain   
 \begin{align}\label{eq181a}
\begin{aligned}
\int_1^{+\infty}\underline{\varrho}^l(x,t)dx\le C,
\end{aligned}
\end{align}     
 for any $t\le T$, $x\in[1,+\infty)$.

From the initial data \eqref{eq173}, there exists some reasonably large constant $\alpha_0>0$ such that
\begin{equation}\label{eq180}
\begin{aligned}
\sup w\big(\varrho_0(x),\omega_0(x)\big)\leq\alpha_0, \qquad \inf z\big(\varrho_0(x),\omega_0(x)\big)\geq-\alpha_0.
\end{aligned}
\end{equation}
And we get the Riemann invariants corresponding to the cut-off function from \eqref{eq154}
\begin{equation}\label{eq181}
\begin{array}{ll}
\displaystyle\underline{w}_0(x)=w(\underline{\varrho}_0(x),\underline{\omega}_0(x))=\frac{\underline{\omega}_0(x)}{\underline{\varrho}_0(x)}+\log\underline{\varrho}_0(x)\leq\alpha_0,\\[3mm]
\displaystyle\underline{z}_0(x)=z(\underline{\varrho}_0(x),\underline{\omega}_0(x))=\frac{\underline{\omega}_0(x)}{\underline{\varrho}_0(x)}-\log\underline{\varrho}_0(x)\geq-\alpha_0.
\end{array}
\end{equation}

According to the properties of Riemann invariants in Lemma \ref{1hs} and \ref{2hs}, the Riemann solutions $(\underline{\varrho}^l,\underline{\omega}^l)(x,t)$ satisfy
\begin{equation}\label{eq182}
\begin{array}{ll}
\underline{w}^l(x,t)\leq\alpha_0,\qquad \underline{z}^l(x,t)\geq-\alpha_0,
\end{array}
\end{equation}
 for $0<t<t_1$.
From \eqref{eq170}, \eqref{eq181a} and Lemma \ref{1hs}, we give the following estimate
\begin{equation}\label{eq183}
\begin{array}{ll}
\displaystyle w^l(x,t)&\displaystyle=\underline{w}^l(x,t)+(\frac{N-1}{x}-\frac{1}{x^{N-1}}\int_1^x\underline{\varrho}^l(y,t)dy)(t-t_i)\\
&\displaystyle\leq\sup\underline{w}^l(x,t)+(N-1)h\\
&\leq\alpha_0+(N-1)h,\\
z^l(x,t)&\displaystyle=\underline{z}^l(x,t)+(\frac{N-1}{x}-\frac{1}{x^{N-1}}\int_1^x\underline{\varrho}^l(y,t)dy)(t-t_i)\\
&\displaystyle\geq\inf\underline{z}^l(x,t)-C(T,M)h\\
&\displaystyle\geq-\alpha_0-C(T,M)h,
\end{array}
\end{equation}
for $t_i\leq t\leq t_{i+1}$.
Then, we get
\begin{equation}\label{eq184}
\begin{aligned}
w^l(x,t)=w(\varrho^l,\omega^l)\leq\alpha_0+Ch, \qquad z^l(x,t)=z(\varrho^l,\omega^l)\geq-\alpha_0-Ch,
\end{aligned}
\end{equation}
where constant $C$ only depends on $T$ and $M$.
Repeating the same procedure, we have
\begin{equation}\label{eq185}
\begin{aligned}
\bar{w}^l(x,t)=w(\bar{\varrho}^l,\bar{\omega}^l)\leq\alpha_0+CT, \quad \bar{z}^l(x,t)=z(\bar{\varrho}^l,\bar{\omega}^l)\geq-\alpha_0-CT,\quad t\in[0,T)
\end{aligned}
\end{equation}
for any $T>0$.
Thus, from \eqref{eq127}, there exists $C(T,M)>0$ such that
 \begin{equation}\label{eq186}
 \begin{aligned}
  0< l^\beta\leq\bar{\varrho}^l(x,t)\leq C(T,M),\qquad\qquad\qquad \qquad\quad\\
\bar{\varrho}^l(x,t)(C(T,M)+\log\bar{\varrho}^l(x,t))\leq\bar{\omega}^l(x,t)\leq \bar{\varrho}^l(x,t)(C(T,M)-\log\bar{\varrho}^l(x,t)),
\end{aligned}
\end{equation}
for any $ 0\leq t\leq T$.
\end{proof}

From Lemma \ref{31hs}, we get the uniform bound for the approximate solutions in $\bar{\Pi}_T$, there exists $l_0=l_0(T)>0$ such that the (CFL) stability condition holds for any $0<l\leq l_0$, which means that the approximate solutions are well-defined in $\Pi_T$.

\section{$H^{-1}$ compactness of entropy dissipation measures}\label{section4}

\setcounter{section}{4}

\setcounter{equation}{0}

\noindent In this section, we shall prove the $H^{-1}$ compactness of entropy dissipation measures
$$\partial_t\eta(\bar{v}^l)+\partial_xq(\bar{v}^l)$$
associated with weak entropy pair $(\eta,q)$ in \eqref{eq139} for the approximate solution $\bar{v}^l$. For this purpose, we use the following lemma, whose proof can be found in \cite{CL, HLY}.

\vskip 0.1in

\begin{lemma}\label{41hs}  Let $\Omega\subset\mathbb{R}^N$ be a bounded open set. Then there exists
$$(\mbox{Compact set of }W^{-1,p}(\Omega))\cap(\mbox{Bounded set of }W^{-1,r}(\Omega))\subset(\mbox{Compact set of }H^{-1}_{loc}(\Omega))$$
for some constants $p$ and $r$ that satisfy $1<p\leq2<r<\infty$.
\end{lemma}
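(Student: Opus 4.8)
\textbf{Proof proposal for Lemma 4.1 (the $W^{-1,p}\cap W^{-1,r}$ compensated compactness embedding).}

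\medskip

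The plan is to deduce this statement from two standard ingredients: the Rellich--Kondrachov compact embedding between Sobolev spaces, and the Murat--Tartar lemma (the div--curl / compensated compactness interpolation that upgrades $W^{-1,r}$ boundedness to $H^{-1}_{\mathrm{loc}}$ compactness once one has $W^{-1,2}$ boundedness). The cleanest route is to show that a set which is compact in $W^{-1,p}(\Omega)$ and bounded in $W^{-1,r}(\Omega)$ is, first, bounded in $W^{-1,2}(\Omega)=H^{-1}(\Omega)$, and second, that its closure in $H^{-1}_{\mathrm{loc}}$ is compact. Since $1<p\le 2<r<\infty$, the exponent $2$ lies between $p$ and $r$, so an interpolation inequality of the form $\|u\|_{W^{-1,2}}\le \|u\|_{W^{-1,p}}^{\theta}\,\|u\|_{W^{-1,r}}^{1-\theta}$ with $\tfrac1{2}=\tfrac{\theta}{p}+\tfrac{1-\theta}{r}$, $\theta\in(0,1)$, immediately gives uniform $H^{-1}(\Omega)$ boundedness from the two hypotheses; this handles the ``bounded'' half.

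\medskip

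For the compactness half I would argue by sequences. Let $\{u_n\}$ be a sequence drawn from such a set. By the $W^{-1,p}$-compactness hypothesis, after passing to a subsequence, $u_n\to u$ strongly in $W^{-1,p}(\Omega)$; by the $W^{-1,r}$ (hence $H^{-1}$) boundedness, after a further subsequence $u_n\rightharpoonup \tilde u$ weakly in $H^{-1}(\Omega)$, and necessarily $\tilde u=u$ by uniqueness of distributional limits. It then remains to upgrade $u_n\rightharpoonup u$ in $H^{-1}$ plus $u_n\to u$ in $W^{-1,p}$ to $u_n\to u$ strongly in $H^{-1}_{\mathrm{loc}}(\Omega)$. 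This is exactly where the interpolation estimate is used a second time, now applied to the differences: $\|u_n-u\|_{H^{-1}(\Omega')}\le \|u_n-u\|_{W^{-1,p}(\Omega')}^{\theta}\,\|u_n-u\|_{W^{-1,r}(\Omega')}^{1-\theta}$ on any relatively compact $\Omega'\Subset\Omega$; the first factor tends to $0$ by hypothesis and the second stays bounded (the $W^{-1,r}$ norms of $u_n-u$ are bounded because $\{u_n\}$ is $W^{-1,r}$-bounded and $u\in W^{-1,r}$ as a weak-$*$ limit), so the product tends to $0$. Hence the original set is precompact in $H^{-1}_{\mathrm{loc}}(\Omega)$, which is the claim. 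One should double-check the functional-analytic point that $W^{-1,r}$-boundedness of $\{u_n\}$ indeed passes to the differences with the strong $W^{-1,p}$-limit $u$; this follows since the $W^{-1,p}$-limit and the $H^{-1}$-weak limit coincide and lie in $W^{-1,r}$ by weak-$*$ lower semicontinuity of the norm on the reflexive space $W^{-1,r}$.

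\medskip

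The main obstacle, and the only genuinely nontrivial input, is the interpolation inequality for negative Sobolev norms $\|\cdot\|_{W^{-1,2}}\lesssim\|\cdot\|_{W^{-1,p}}^{\theta}\|\cdot\|_{W^{-1,r}}^{1-\theta}$ on a bounded domain. This can be obtained by duality from the corresponding Gagliardo--Nirenberg interpolation between $W^{1,p'}$, $W^{1,r'}$ and $W^{1,2}$ for the (positive-order) test-function spaces, or alternatively by reducing to $\mathbb{R}^N$ via extension operators and using the Bessel-potential characterization together with complex interpolation of $L^q$ spaces; on a smooth bounded $\Omega$ either approach works, and for the purpose of this paper one may simply cite \cite{CL} or \cite{HLY} where the statement is recorded. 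Everything else in the argument — extracting subsequences, identifying limits, restricting to $\Omega'\Subset\Omega$ — is routine. I therefore expect the write-up to be short: state the interpolation inequality (with reference), deduce the $H^{-1}$ bound, then run the subsequence argument above to conclude precompactness in $H^{-1}_{\mathrm{loc}}$.
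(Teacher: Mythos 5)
The paper gives no proof of this lemma at all---it simply cites \cite{CL} and \cite{HLY}---so there is nothing to compare line by line; your argument is the standard proof of Murat's interpolation lemma and is essentially correct. The one point worth tightening is the localization: rather than restricting to an arbitrary $\Omega'\Subset\Omega$ and worrying about extension operators, it is cleaner to fix a cutoff $\chi\in C_0^\infty(\Omega)$ with $\chi\equiv1$ on $\Omega'$, observe that $\{\chi u_n\}$ is still compact in $W^{-1,p}(\mathbb{R}^N)$ and bounded in $W^{-1,r}(\mathbb{R}^N)$ with supports in a fixed compact set, and then apply the interpolation inequality $\|\cdot\|_{H^{-1}}\le C\|\cdot\|_{W^{-1,p}}^{\theta}\|\cdot\|_{W^{-1,r}}^{1-\theta}$ on all of $\mathbb{R}^N$, where it follows from complex interpolation of the Bessel-potential scale; this removes any regularity hypothesis on $\Omega'$ and is exactly why the conclusion is only $H^{-1}_{loc}$. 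With that adjustment your subsequence argument goes through verbatim.
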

\vskip 0.1in

By the Lemma \ref{41hs}, we apply the duality and the Sobolev interpolation inequality to obtain the $H^{-1}$ compactness. For simplicity, we drop the superscript $l$ of the approximate solution $\bar{v}^l$.

\vskip 0.1in

\begin{theorem}\label{theorem41}  Assume that the initial data satisfies \eqref{eq173}, then the sequence of the entropy dissipation measure 
 \begin{equation}\label{eq187}
\begin{aligned}
\partial_t\eta(\bar{v})+\partial_xq(\bar{v}) \  \mbox{is compact in }H^{-1}_{loc}(\Pi),
\end{aligned}
\end{equation}
for any weak entropy pair $(\eta,q)$ in \eqref{eq139} with $\xi\in(-1,1)$.
\end{theorem}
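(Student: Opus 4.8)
The plan is to decompose the entropy dissipation measure associated with $(\eta,q)$ in \eqref{eq139} according to the three distinct mechanisms present in the fractional Lax–Friedrichs scheme: (a) the genuine entropy dissipation carried by the shock waves inside each Riemann cell, (b) the discretization error introduced at each time level $t_i$ by replacing $v^l$ with its cell-averages $\bar v_j^i$, and (c) the contribution of the geometric/Poisson source term $g(\underline v^l)$ absorbed through the update \eqref{eq170}. Concretely, for a test function $\phi\in C_0^\infty(\Pi)$ one writes
\begin{equation}\label{eq-plan-decomp}
\langle \partial_t\eta(\bar v)+\partial_xq(\bar v),\phi\rangle = I^l_{\mathrm{shock}}(\phi)+I^l_{\mathrm{avg}}(\phi)+I^l_{\mathrm{source}}(\phi)+I^l_{\mathrm{cut}}(\phi),
\end{equation}
where $I^l_{\mathrm{cut}}$ collects the error coming from the cut-off $\bar\varrho^l=\max\{\varrho^l,l^\beta\}$. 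By Murat's lemma (Lemma \ref{41hs}) it suffices to show that each piece is either (i) a bounded sequence of Radon measures — hence precompact in $W^{-1,p}_{\mathrm{loc}}$ for $1<p<2$ — or (ii) bounded in $W^{-1,r}_{\mathrm{loc}}$ (in fact in $L^r_{\mathrm{loc}}$) for some $r>2$; and that the relevant pieces are nonpositive up to controlled errors so that the measure bound can be extracted from the mechanical entropy inequality.

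First I would record that, by Lemma \ref{31hs}, on any $\Pi_T$ the approximate solutions obey $l^\beta\le\bar\varrho^l\le C(T,M)$ and $|\bar\omega^l/\bar\varrho^l|\le C(T,M)(1+|\log l|)$, so every smooth entropy $(\eta,q)$ from the family \eqref{eq139} is uniformly bounded along the sequence; this immediately gives $\eta(\bar v)_t+q(\bar v)_x$ bounded in $W^{-1,\infty}_{\mathrm{loc}}\subset W^{-1,r}_{\mathrm{loc}}$ for every $r$, which is the ``bounded'' half of Lemma \ref{41hs}. For the ``compact'' half I would estimate $I^l_{\mathrm{shock}}$ and $I^l_{\mathrm{avg}}$ exactly as in the homogeneous isothermal case of Huang–Wang \cite{HW} and the classical Ding–Chen–Luo scheme \cite{DCL1, DCL2}: the shock term is a nonpositive measure whose total variation is controlled by $\sum|\epsilon(\varrho)|^2$ over all wave interactions, which in turn is bounded via the $L^2$ estimate of Lemma \ref{6hs} together with the diminishing-oscillation (invariant-region) bounds \eqref{eq182}–\eqref{eq185}; and the averaging term is estimated by Lemma \ref{3hs}, which quantifies $\int|g-\bar g|^2$ on each cell by the squared jumps, again summable. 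The net outcome is $I^l_{\mathrm{shock}}+I^l_{\mathrm{avg}}$ bounded in the space of Radon measures on each compact subset, hence precompact in $W^{-1,p}_{\mathrm{loc}}$, $1<p<2$.

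The new pieces relative to \cite{HW} are $I^l_{\mathrm{source}}$ and $I^l_{\mathrm{cut}}$, and $I^l_{\mathrm{source}}$ is where I expect the main obstacle. From \eqref{eq170}, on $t_i<t<t_{i+1}$ one has $v^l=\underline v^l+g(\underline v^l)(t-t_i)$ with $|t-t_i|\le h$, so a Taylor expansion gives $\partial_t\eta(v^l)+\partial_xq(v^l)=\nabla\eta(\underline v^l)\cdot g(\underline v^l)+O(h)$-type remainders plus the distributional time-derivative jumps at the levels $t_i$ (which merge into $I^l_{\mathrm{avg}}$). The leading term $\nabla\eta(\underline v^l)\cdot g(\underline v^l)$ is a genuine \emph{function}, not a measure, and the key point is to show it is bounded in $L^r_{\mathrm{loc}}$ for some $r>2$ — indeed it is bounded in $L^\infty_{\mathrm{loc}}$ once we observe that $g_1\equiv0$, that $g_2=\frac{N-1}{x}\varrho-\frac{\varrho}{x^{N-1}}\int_1^x\varrho\,ds$ is uniformly bounded on $\Pi_T$ by $\eqref{eq142}$-type bounds (using the total-mass bound \eqref{eq181a} for the nonlocal integral, noting $x\ge1$ keeps $1/x^{N-1}$ bounded), and that $\eta_\omega=\frac{\xi}{1-\xi^2}\varrho^{\frac{1}{1-\xi^2}-1}e^{\frac{\xi}{1-\xi^2}\omega/\varrho}$ is controlled on the region $l^\beta\le\varrho\le C$, $|\omega/\varrho|\le C(1+|\log l|)$; the potentially dangerous negative power of $\varrho$ is beaten by the positive power $\varrho^{1/(1-\xi^2)}$ since $1/(1-\xi^2)>1$, so no blow-up as $l\to0$. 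This uniform $L^\infty_{\mathrm{loc}}$ bound (hence $W^{-1,r}_{\mathrm{loc}}$ bound for all $r$) for $I^l_{\mathrm{source}}$, combined with the measure bound for the shock and averaging terms, verifies both hypotheses of Lemma \ref{41hs}. Finally $I^l_{\mathrm{cut}}$ is handled by the standard estimate that the cut-off alters $\varrho^l$ only on a set where $\varrho^l\le l^\beta$, contributing an error that tends to zero in $L^1_{\mathrm{loc}}$ (using $\beta\ge3$), hence is compact; the delicate bookkeeping there — making sure the $\log$ singularity in $\eta_e$ and the weak entropies is integrable against the cut-off region — is exactly the place where the hypothesis $3\le\beta\le4$ and the mesh relation \eqref{eq148} are used. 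Assembling \eqref{eq-plan-decomp} and applying Lemma \ref{41hs} termwise yields \eqref{eq187}.
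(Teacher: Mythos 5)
Your overall strategy---decompose the dissipation measure into shock, averaging, source and cut-off contributions, bound each piece either as a measure or as a small/bounded object in $W^{-1,r}$, and invoke Lemma \ref{41hs}---is the same as the paper's, and your treatment of the shock, source and cut-off pieces is essentially right, modulo one imprecision worth fixing: $\eta_\omega=\frac{\xi}{(1-\xi^2)\varrho}\eta$ is \emph{not} uniformly bounded on the invariant region, because the exponential factor in $\eta$ costs $e^{\frac{|\xi|}{1-\xi^2}|\log\varrho|}=\varrho^{-|\xi|/(1-\xi^2)}$, so that $\eta_\omega\lesssim\varrho^{-|\xi|/(1+|\xi|)}$, which blows up as $\varrho\downarrow l^\beta\to0$. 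What saves the source term is not the algebraic power of $\varrho$ in $\eta_\omega$ but the explicit factor $\varrho$ carried by $g_2$: the product $\eta_\omega\, g_2=\frac{\xi}{1-\xi^2}\eta\cdot\bigl(\frac{N-1}{x}-\frac{1}{x^{N-1}}\int_1^x\varrho\,ds\bigr)$ involves only the bounded quantity $\eta\le C\varrho^{(1-|\xi|)/(1-\xi^2)}$; this cancellation is exactly what \eqref{eq196}--\eqref{eq199} exploit.

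The genuine gap is your claim that the averaging term is a bounded sequence of Radon measures; it is not. Writing $D(\varphi)=\sum_{i,j}\int_{x_{j-1}}^{x_{j+1}}(\eta(\bar v^{\underline{i}})-\eta(\bar v^i_j))\varphi(x,t_i)\,dx$ and Taylor expanding, the quadratic part does have total mass controlled by the $L^2$ estimate \eqref{eq188} (which is itself extracted from the global entropy balance, as you anticipate), but the \emph{linear} part $\nabla\eta(\bar v^i_j)(\bar v^{\underline{i}}-\bar v^i_j)$ has total variation of order $h^{-1/2}\bigl(\sum_{i,j}\int|\bar v^{\underline{i}}-\bar v^i_j|^2dx\bigr)^{1/2}\sim h^{-1/2}\to\infty$ by Cauchy--Schwarz, and \eqref{eq188} gives nothing better in $L^1$. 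The paper therefore splits $D=D_1+D_2$ as in \eqref{eq212}: in $D_1$ the test function is frozen at the cell values $\varphi^i_j$, so the linear part integrates to zero on each cell because $\bar v^i_j$ is precisely the cell average of $\bar v^{\underline{i}}$, and convexity then makes $D_1$ a nonnegative measure of total mass $I_3\le C(T,M)$; the remainder $D_2$ gains a factor $\|\varphi\|_{C^\alpha}l^\alpha$ from the modulus of continuity of the test function and is shown to be $O(l^{\alpha-8/9})$ in $W^{-1,p_3}$ through the estimates \eqref{eq215}--\eqref{eq223}, including the four-case analysis needed to control the negative powers $\bar\varrho_\theta^{-1/2}$ near the vacuum cut-off. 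Without this cancellation-plus-interpolation step, or some substitute for it, the appeal to Murat's lemma does not go through for the averaging term, so this is the missing idea you would need to supply.
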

\vskip 0.1in

\begin{proof}
    
    To present the proof process clearly, we divide it into the following three steps.

{\bf step 1. } 
For the approximate solution $\bar{v}$, we claim that
 \begin{equation}\label{eq188}
\begin{aligned}
\sum\limits_{ih\leq T}\sum\limits_{1+(j+1)l\leq L}\int_{x_{j-1}}^{x_{j+1}}(\bar{v}(x,t_i-0)-\bar{v}^i_j)^2dx\leq C(T,M),
\end{aligned}
\end{equation}
holds for any $L>1$, where $C(T,M)$ is some constant and $\bar{v}^i_j$ defined in \eqref{eq154}, \eqref{eq162} and \eqref{eq163}.

Indeed, a strictly convex entropy $\eta$ was fixed in \eqref{eq139} with $\xi\in(-\frac{1}{2},\frac{1}{2})$. In the time strip $0\leq t_i\leq t\leq t_{i+1}\leq T$, 
 by the Green formula for the Riemann solutions $\underline{v}$, we have
\begin{equation}\label{eq189}
\begin{aligned}
\sum\limits_{1+(j+1)l\leq L}\int_{x_{j-1}}^{x_{j+1}}\eta(\underline{v}^{\underline{i+1}})-\eta(\bar{v}_j^i)dx+\int_{t_i}^{t_{i+1}}\sum(\sigma[\eta]-[q])dt\leq0,
\end{aligned}
\end{equation}
where $\underline{v}^{\underline{i+1}}=\underline{v}(x,t_{i+1}-0)$
, the summation $\sum$ is taken over all shock waves in $\underline{v}$ at fixed time $t\in[t_i,t_{i+1}]$. $\sigma$ is the propagating speed of the shock wave, $[\eta]$ and $[q]$ denote the jump of $\eta(\underline{v}(x,t))$ and $q(\underline{v}(x,t))$ across the shock wave in $\underline{v}(x,t)$ from the left to the right, i.e.
\begin{equation}\label{eq190}
\begin{aligned}
\ [\eta]=\eta(\underline{v}(x(t)+0,t))-\eta(\underline{v}(x(t)-0,t)), \ [q]=q(\underline{v}(x(t)+0, t))-q(\underline{v}(x(t)-0,t)),
\end{aligned}
\end{equation}
where $x(t)$ is the location that the shock wave occurs. Summing over all $i$ in \eqref{eq189} for $t\in (0,T)$, we have
\begin{equation}\label{eq192}
\begin{aligned}
\displaystyle\sum\limits_{i,j}\int_{x_{j-1}}^{x_{j+1}}&\eta(\underline{v}^{\underline{i}})-\eta(v^{\underline{i}})dx+\sum\limits_{i,j}\int_{x_{j-1}}^{x_{j+1}}\eta(v^{\underline{i}})-\eta(\bar{v}^{\underline{i}})dx\\
\displaystyle &+\sum\limits_{i,j}\int_{x_{j-1}}^{x_{j+1}}\eta(\bar{v}^{\underline{i}})-\eta(\bar{v}_j^i)dx +\int_{0}^{T}\sum(\sigma[\eta]-[q])dt\\
&\qquad=I_1+I_2+I_3+I_4\leq C(T,M),
\end{aligned}
\end{equation}
with $1\leq1+jl\leq L$, $0\leq ih\leq T$, $i+j$ even, and $v^{\underline{i}}=v(x,t_i-0)$.

First, the entropy inequality
\begin{equation}\label{eq191}
\begin{aligned}
I_4=\int_{0}^{T}\sum(\sigma[\eta]-[q])dt\geq0
\end{aligned}
\end{equation}
holds because of the convexity for $\eta$ in \eqref{eq139}.

Noting that
\begin{equation}\label{eq194}
\begin{array}{ll}
\nabla\eta=(\partial_\varrho\eta,\partial_\omega\eta)=(\frac{1}{(1-\xi^2)\varrho}(1-\xi\frac{\omega}{\varrho})\eta,\frac{\xi}{(1-\xi^2)\varrho}\eta),
\end{array}
\end{equation}
and
\begin{equation}\label{eq195}
\begin{array}{ll}
\displaystyle\nabla^2\eta=\left|
 \begin{array}{ccc}
\displaystyle \eta_{\varrho\varrho} & \displaystyle \eta_{\varrho\omega}\\
\displaystyle \eta_{\omega\varrho} & \displaystyle\eta_{\omega\omega} \\
  \end{array}
  \right| =\left|
 \begin{array}{ccc}
 \displaystyle\frac{\xi^2}{(1-\xi^2)^2\varrho^2}\eta(\frac{\omega}{\varrho^2}-\frac{2\xi\omega}{\varrho}+1) & \displaystyle \frac{\xi^2}{(1-\xi^2)^2\varrho^2}\eta(\xi-\frac{\omega}{\varrho})\\[3mm]
\displaystyle \frac{\xi^2}{(1-\xi^2)^2\varrho^2}\eta(\xi-\frac{\omega}{\varrho}) &\displaystyle \frac{\xi^2}{(1-\xi^2)^2\varrho^2}\eta \\
  \end{array}
  \right|\\
\displaystyle \qquad\qquad =\frac{\xi^4}{(1-\xi^2)^3}\varrho^{\frac{2\xi^2}{1-\xi^2}-2}e^{\frac{2\xi}{1-\xi^2}\frac{\omega}{\varrho}}.
\end{array}
\end{equation}
By the construction step \eqref{eq170}, we have
\begin{equation}\label{eq193}
\begin{array}{ll}
\displaystyle v^{\underline{i}}(x,t)-\underline{v}^{\underline{i}}(x,t)=g(\underline{v}^{\underline{i}}(x,t))h=(0, \frac{N-1}{x}\underline{\varrho}^{\underline{i}}-\frac{\underline{\varrho}^{\underline{i}}}{x^{N-1}}\int_0^x\underline{\varrho}^{\underline{i}}(y,t)dy)h,
\end{array}
\end{equation}
with $$\varrho^{\underline{i}}=\underline{\varrho}^{\underline{i}}, \quad \omega^{\underline{i}}=\underline{\omega}^{\underline{i}}+\frac{N-1}{x}\underline{\varrho}^{\underline{i}}h-\frac{\underline{\varrho}^{\underline{i}}}{x^{N-1}}h\int_0^x\underline{\varrho}^{\underline{i}}(y,t)dy.$$

Let $\omega_\theta^i=\underline{\omega}^{\underline{i}}+\theta(\omega^{\underline{i}}-\underline{\omega}^{\underline{i}}),\ 0\leq\theta\leq1$. 
We have
\begin{equation}\label{eq196}
\begin{array}{ll}
\displaystyle |I_1|=|\sum\limits_{i,j}\int_{x_{j-1}}^{x_{j+1}}\eta(\underline{v}^{\underline{i}})-\eta(v^{\underline{i}})dx|\\
\displaystyle\leq\sum\limits_{i,j}\int_{x_{j-1}}^{x_{j+1}}|[\int_0^1\nabla\eta(\underline{v}^{\underline{i}}+\theta(v^{\underline{i}}-\underline{v}^{\underline{i}}))d\theta](v^{\underline{i}}-\underline{v}^{\underline{i}})|dx\\
\displaystyle=\sum\limits_{i,j}\int_{x_{j-1}}^{x_{j+1}}|\int_0^1\eta_\omega(\varrho^{\underline{i}},\omega_\theta^i)(\omega^{\underline{i}}-\underline{\omega}^{\underline{i}})d\theta|dx\\
\displaystyle=\sum\limits_{i,j}\int_{x_{j-1}}^{x_{j+1}}|\int_0^1\frac{\xi}{(1-\xi^2)}\eta(\varrho^{\underline{i}},\omega_\theta^i)(\frac{N-1}{x}h-\frac{h}{x^{N-1}}\int_1^x\underline{\varrho}^{\underline{i}}(y,t)dy)d\theta|dx\\
\displaystyle\leq\sum\limits_{i,j}\int_{x_{j-1}}^{x_{j+1}}\int_0^1\frac{|\xi|}{(1-\xi^2)}\eta(\varrho^{\underline{i}},\omega_\theta^i)d\theta \frac{N-1}{x}h dx.
\end{array}
\end{equation}
From Lemma \ref{31hs}, there is
\begin{equation}\label{eq197}
\begin{aligned}
\displaystyle-C(T,M)+\log\varrho^{\underline{i}}\leq\frac{\omega^i_\theta}{\varrho^{\underline{i}}}=\frac{\underline{\omega}^{\underline{i}}}{\underline{\varrho}^{\underline{i}}}+\theta(\frac{\omega^{\underline{i}}}{\varrho^{\underline{i}}}-\frac{\underline{\omega}^{\underline{i}}}{\underline{\varrho}^{\underline{i}}})\leq C(T,M)-\log\varrho^{\underline{i}},
\end{aligned}
\end{equation}
where $C(T,M)$ only depends on $T$ and $M$. 
And then
\begin{equation}\label{eq198}
\begin{aligned}
\displaystyle\eta(\varrho^{\underline{i}},\omega_\theta^i)=(\varrho^{\underline{i}})^{\frac{1}{1-\xi^2}}e^{\frac{\xi}{1-\xi^2}\frac{\omega^i_\theta}{\varrho^{\underline{i}}}}\leq C'(\varrho^{\underline{i}})^{\frac{1}{1-\xi^2}}e^{-\frac{|\xi|}{1-\xi^2}\log\varrho^{\underline{i}}}\leq C'(\varrho^{\underline{i}})^{\frac{1}{1-\xi^2}-\frac{|\xi|}{1-\xi^2}}\leq C,
\end{aligned}
\end{equation}
where $\frac{2}{3}\leq\frac{1}{1-\xi^2}\leq1$ with $\xi\in(-\frac{1}{2},\frac{1}{2})$, and $C'$, $C$ are some positive constants.
Thus, we obtain
\begin{equation}\label{eq199}
\begin{aligned}
|I_1|\leq\sum\limits_{i,j}\int_{x_{j-1}}^{x_{j+1}}Chdx\leq C(T,M).
\end{aligned}
\end{equation}
For $I_2$, we have
\begin{equation}\label{eq200}
\begin{array}{ll}
\displaystyle|I_2|=|\sum\limits_{i,j}\int_{x_{j-1}}^{x_{j+1}}\eta(v^{\underline{i}})-\eta(\bar{v}^{\underline{i}})dx|\\
\displaystyle\leq|\sum\limits_{i,j}\int_{x_{j-1}}^{x_{j+1}}\eta(v^{\underline{i}})-\eta(\bar{v}^{\underline{i}})dx|_{\varrho^{\underline{i}}>l^\beta}+|\sum\limits_{i,j}\int_{x_{j-1}}^{x_{j+1}}\eta(v^{\underline{i}})-\eta(\bar{v}^{\underline{i}})dx|_{\varrho^{\underline{i}}\leq l^\beta}\\
\displaystyle\leq0+|\sum\limits_{i,j}\int_{x_{j-1}}^{x_{j+1}}\eta(v^{\underline{i}})-\eta(\bar{v}^{\underline{i}})dx|_{\varrho^{\underline{i}}\leq l^\beta}\\
\displaystyle\leq C(T,L)l^{\beta\frac{1-|\xi
|}{1-\xi^2}}\frac{1}{h}\leq C(T,L)l^{\beta\frac{1-|\xi
|}{1-\xi^2}-1}(1+|\log l|),
\end{array}
\end{equation}
where
\begin{equation}\label{eq2001}
\begin{aligned}
1<\frac{3}{2}\leq\beta\frac{1-|\xi|}{1-\xi^2}\leq\frac{16}{3},
\end{aligned}
\end{equation}
for $\xi\in(-\frac{1}{2},\frac{1}{2})$ and $3\leq\beta\leq4$. Thus, we know
\begin{equation}\label{eq201}
\begin{aligned}
|I_2|\rightarrow 0  \qquad \  \mbox{as} \qquad l\rightarrow 0.
\end{aligned}
\end{equation}
Considering the convexity of $\eta$, we have
\begin{equation}\label{eq202}
\begin{array}{ll}
\displaystyle I_3=\sum\limits_{i,j}\int_{x_{j-1}}^{x_{j+1}}\eta(\bar{v}^{\underline{i}})-\eta(\bar{v}_j^i)dx\\
\displaystyle\quad=\sum\limits_{i,j}\int_{x_{j-1}}^{x_{j+1}}\nabla\eta(\bar{v}^i_j)(\bar{v}^{\underline{i}}-\bar{v}^i_j)dx+\sum\limits_{i,j}\int_{x_{j-1}}^{x_{j+1}}(\bar{v}^{\underline{i}}-\bar{v}^i_j)^\tau\nabla^2\eta(\bar{v}_\theta)(\bar{v}^{\underline{i}}-\bar{v}^i_j)dx\\
\displaystyle\quad \geq0+\sum\limits_{i,j}\int_{x_{j-1}}^{x_{j+1}}\frac{\xi^4}{(1-\xi^2)^3}\bar{\varrho}_\theta^{\frac{2\xi^2}{1-\xi^2}-2}e^{\frac{2\xi}{1-\xi^2}\frac{\bar{\omega}_\theta}{\bar{\varrho}_\theta}}(\bar{v}^{\underline{i}}-\bar{v}^i_j)^2dx\\
\displaystyle\quad \geq C(T,M)\sum\limits_{i,j}\int_{x_{j-1}}^{x_{j+1}}(\bar{v}^{\underline{i}}-\bar{v}^i_j)^2dx\geq0,
\end{array}
\end{equation}
where $\displaystyle\nabla^2\eta(\bar{v}_\theta)=\int_0^1(1-\theta)\nabla^2\eta(\bar{v}^i_j+\theta(\bar{v}^{\underline{i}}-\bar{v}^i_j))d\theta$ and $\bar{v}_\theta=(\bar{\varrho}_\theta,\bar{\omega}_\theta)=\bar{v}^i_j+\theta(\bar{v}^{\underline{i}}-\bar{v}^i_j)$.
And from \eqref{eq192} and \eqref{eq191}, we have
\begin{equation}\label{eq203}
\begin{aligned}
0\leq I_3\leq C(T,M), \qquad 0\leq I_4\leq C(T,M).
\end{aligned}
\end{equation}
Then we conclude \eqref{eq188} directly from \eqref{eq202}.

{\bf step 2. } For any function $\varphi\in C_0^\infty(\Pi_T)$ with $supp\varphi\in\Omega\subset[1,L)\times[0,T]$ where $\Omega$ is an open set, the entropy equality can be written in the form
\begin{equation}\label{eq204}
\begin{aligned}
\iint\limits_{\Pi_T}(\eta(\bar{v})\varphi_t+q(\bar{v})\varphi_x)dxdt=A(\varphi)+B(\varphi)+D(\varphi)+S(\varphi),
\end{aligned}
\end{equation}
where
\begin{equation}\label{eq205}
\begin{aligned}
\displaystyle &A(\varphi)=\sum\limits_{i,j}\int_{x_{j-1}}^{x_{j+1}}(\eta(\underline{v}^{\underline{i}})-\eta(v^{\underline{i}}))\varphi(x,t_i)dx+\sum\limits_{i,j}\int_{x_{j-1}}^{x_{j+1}}(\eta(v^{\underline{i}})-\eta(\bar{v}^{\underline{i}}))\varphi(x,t_i)dx,\\
\displaystyle &B(\varphi)=\iint\limits_{\Pi_T}(\eta(\bar{v})-\eta(\underline{v}))\varphi_t+(q(\bar{v})-q(\underline{v}))\varphi_x)dxdt,\\
\displaystyle &D(\varphi)=\sum\limits_{i,j}\int_{x_{j-1}}^{x_{j+1}}(\eta(\bar{v}^{\underline{i}})-\eta(\bar{v}_j^i))\varphi(x,t_i)dx,\\
\displaystyle &S(\varphi)=\int_{0}^{T}\sum(\sigma[\eta]-[q])\varphi(x(t),t)dt.
\end{aligned}
\end{equation}
From \eqref{eq199} and \eqref{eq201}, we have
\begin{equation}\label{eq206}
\begin{aligned}
|A(\varphi)|\leq C\|\varphi\|_\infty.
\end{aligned}
\end{equation}
And using \eqref{eq203}, there yields
\begin{equation}\label{eq207}
\begin{aligned}
|S(\varphi)|\leq C\|\varphi\|_\infty.
\end{aligned}
\end{equation}
We denote
\begin{equation}\label{eq208}
\begin{array}{ll}
\displaystyle B(\varphi)=\iint\limits_{\Pi_T}(\eta(v)-\eta(\underline{v}))\varphi_t+(q(v)-q(\underline{v}))\varphi_x)dxdt\\
\qquad\qquad+\iint\limits_{\Pi_T}(\eta(\bar{v})-\eta(v))\varphi_t+(q(\bar{v})-q(v))\varphi_x)dxdt\\
\qquad :=B_1(\varphi)+B_2(\varphi).
\end{array}
\end{equation}
Noting that
\begin{equation}\label{eq2007}
\begin{aligned}
&\partial_\varrho q=q_\varrho=-\frac{\omega}{\varrho^2}\eta+(\frac{\omega}{\varrho}+\xi)\eta_\varrho=\big[(\frac{\omega}{\varrho}+\xi)\frac{1-\xi\frac{\omega}{\varrho}}{(1-\xi^2)\varrho}-\frac{\omega}{\varrho^2}\big]\eta,\\
&\partial_\omega q=q_\omega=\frac{1}{\varrho}\eta+(\frac{\omega}{\varrho}+\xi)\eta_\omega=\big[\frac{1}{\varrho}+(\frac{\omega}{\varrho}+\xi)\frac{\xi}{(1-\xi^2)\varrho}\big]\eta.
\end{aligned}
\end{equation}

Set $\omega_\theta=\underline{\omega}+\theta(\omega-\underline{\omega})$.
From \eqref{eq198}, we have
\begin{equation}\label{eq210}
\displaystyle\eta(\varrho,\omega_\theta)|\log\varrho|\leq C'\varrho^{\frac{1-|\xi|}{1-\xi^2}}|\log\varrho|\leq C.
\end{equation}
Then, there exists
\begin{equation}\label{eq209}
\begin{aligned}
\displaystyle &|B_1(\varphi)|\leq|\iint\limits_{\Pi_T}\int_0^1(\eta_\omega(\underline{v}+\theta(v-\underline{v}))d\theta(\omega-\underline{\omega})\varphi_tdxdt|\\
&\qquad+|\iint\limits_{\Pi_T}\int_0^1(q_\omega(\underline{v}+\theta(v-\underline{v}))d\theta(\omega-\underline{\omega})\varphi_xdxdt|\\
\displaystyle &\leq Ch|\iint\limits_{\Pi_T}\int_0^1\eta(\varrho,\omega_\theta)d\theta(\frac{N-1}{x}-\frac{1}{x^{N-1}}\int_0^x\underline{\varrho}(y,t)dy)\varphi_tdxdt|\\
\displaystyle   &\quad+Ch|\iint\limits_{\Pi_T}\int_0^1\eta(\varrho,\omega_\theta)(1+\frac{\omega_\theta}{\varrho}+\xi)d\theta(\frac{N-1}{x}-\frac{1}{x^{N-1}}\int_0^x\underline{\varrho}(y,t)dy)\varphi_xdxdt|\\
\displaystyle & \leq C(T,M)h\|\varphi\|_{H^1_0(\Omega)},
\end{aligned}
\end{equation}
and
\begin{equation}\label{eq211}
\begin{aligned}
\displaystyle & |B_2(\varphi)|\leq|\iint\limits_{\Pi_T}(\eta(\bar{v})-\eta(v))\varphi_tdxdt|+|\iint\limits_{\Pi_T}(q(\bar{v})-q(v))\varphi_x)dxdt|\\
\displaystyle &\leq Cl^{\beta{\frac{1-|\xi|}{1-\xi^2}}}|\iint\limits_{\Pi_T}\varphi_tdxdt|_{\varrho\leq l^\beta}+0|_{\varrho\geq l^\beta}+C|\iint\limits_{\Pi_T}\varrho^{{\frac{1-|\xi|}{1-\xi^2}}}\varphi_x\log\varrho dxdt|_{\varrho\leq l^\beta}+0|_{\varrho\geq l^\beta}\\
\displaystyle &\leq Cl\|\varphi\|_{H^1_0(\Omega)}.
\end{aligned}
\end{equation}
That means
\begin{equation}\label{eq1208}
\begin{array}{ll}
|B(\varphi)|\leq Cl^{\frac{1}{2}}\|\varphi\|_{H^1_0(\Omega)}\leq Cl^{\frac{1}{2}}.
\end{array}
\end{equation}

Now, we set
\begin{equation}\label{eq212}
\begin{aligned}
\displaystyle D(\varphi) &=\sum\limits_{i,j}\int_{x_{j-1}}^{x_{j+1}}(\eta(\bar{v}^{\underline{i}})-\eta(\bar{v}_j^i))\varphi_j^idx+\sum\limits_{i,j}\int_{x_{j-1}}^{x_{j+1}}(\eta(\bar{v}^{\underline{i}})-\eta(\bar{v}_j^i))(\varphi^i-\varphi_j^i)dx\\
\displaystyle &:=D_1(\varphi)+D_2(\varphi),
\end{aligned}
\end{equation}
where $\varphi_j^i=\varphi(x_j,t_i)$ and $\varphi^i=\varphi(x,t_i)$.
By \eqref{eq203}, we get
\begin{equation}\label{eq213}
\begin{aligned}
D_1(\varphi)\leq\|\varphi\|_\infty\sum\limits_{i,j}\int_{x_{j-1}}^{x_{j+1}}(\eta(\bar{v}^{\underline{i}})-\eta(\bar{v}_j^i))dx\leq C\|\varphi\|_\infty,
\end{aligned}
\end{equation}
and
\begin{equation}\label{eq214}
\begin{aligned}
\displaystyle &|D_2(\varphi)|\leq\sum\limits_{i,j}\int_{x_{j-1}}^{x_{j+1}}|(\eta(\bar{v}^{\underline{i}})-\eta(\bar{v}_j^i))(\varphi^i-\varphi_j^i)|dx\\
\displaystyle & \leq C\|\varphi\|_{C^\alpha}l^\alpha\sum\limits_{i,j}\int_{x_{j-1}}^{x_{j+1}}|\int_0^1\nabla\eta(\bar{v}_\theta)(\bar{v}^{\underline{i}}-\bar{v}_j^i)d\theta|dx\\
\displaystyle & =C\|\varphi\|_{C^\alpha}l^\alpha\sum\limits_{i,j}\int_{x_{j-1}}^{x_{j+1}}|\int_0^1\partial_{\bar{\varrho}_\theta}\eta(\bar{v}_\theta)(\bar{\varrho}^{\underline{i}}-\bar{\varrho}_j^i)+\partial_{\bar{\omega}_\theta}\eta(\bar{v}_\theta)(\bar{\omega}^{\underline{i}}-\bar{\omega}_j^i)d\theta|dx,
\end{aligned}
\end{equation}
where $\bar{\varrho}_\theta=\bar{\varrho}^i_j+\theta(\bar{\varrho}^{\underline{i}}-\bar{\varrho}^i_j)=\bar{\varrho}^{\underline{i}}+(1-\theta)(\bar{\varrho}^i_j-\bar{\varrho}^{\underline{i}})$ and some constant $1>\alpha>\frac{8}{9}$. For simplicity, we choose $\eta_\theta=\eta(\bar{v}_\theta)$.
From the Lemma \ref{31hs} and \eqref{eq188}, using the H\"{o}lder inequality, we have
\begin{equation}\label{eq215}
\begin{array}{ll}
\displaystyle\sum\limits_{i,j}\int_{x_{j-1}}^{x_{j+1}}|\int_0^1\partial_{\bar{\varrho}_\theta}\eta_\theta(\bar{\varrho}^{\underline{i}}-\bar{\varrho}_j^i)d\theta|dx\\
\displaystyle\qquad \quad \leq C\sum\limits_{i,j}\int_{x_{j-1}}^{x_{j+1}}|\int_0^1(1+|\log\bar{\varrho}_\theta|)\bar{\varrho}_\theta^{\frac{\xi^2-|\xi|}{1-\xi^2}}(\bar{\varrho}^{\underline{i}}-\bar{\varrho}_j^i)d\theta|dx\\
\displaystyle\qquad \quad \leq C\sum\limits_{i,j}\int_{x_{j-1}}^{x_{j+1}}|\int_0^1(1+|\log\bar{\varrho}_\theta|)\bar{\varrho}_\theta^{\frac{\xi^2-|\xi|}{1-\xi^2}}(\bar{\varrho}^{\underline{i}}-\bar{\varrho}_j^i)d\theta|dx|_{\bar{\varrho}_\theta>1}\\
\displaystyle\qquad \quad\quad + C\sum\limits_{i,j}\int_{x_{j-1}}^{x_{j+1}}|\int_0^1(1+|\log\bar{\varrho}_\theta|)\bar{\varrho}_\theta^{\frac{\xi^2-|\xi|}{1-\xi^2}}(\bar{\varrho}^{\underline{i}}-\bar{\varrho}_j^i)d\theta|dx|_{\bar{\varrho}_\theta<1}\\
\displaystyle\qquad \quad \leq C(T,L)h^{-\frac{1}{2}}+ C\sum\limits_{i,j}\int_{x_{j-1}}^{x_{j+1}}|\bar{\varrho}^{\underline{i}}-\bar{\varrho}_j^i|^\frac{1}{4}dx\\
\displaystyle\qquad \quad \leq C(T,L)h^{-\frac{7}{8}},
\end{array}
\end{equation}
where $\frac{\xi^2-|\xi|}{1-\xi^2}\in(-\frac{1}{2},0)$ when $\xi\in(-\frac{1}{2},\frac{1}{2}).$
In addition, we give
\begin{equation}\label{eq217}
\begin{array}{ll}
\displaystyle\sum\limits_{i,j}\int_{x_{j-1}}^{x_{j+1}}|\int_0^1\partial_{\bar{\omega}_\theta}\eta(\bar{v}_\theta)(\bar{\omega}^{\underline{i}}-\bar{\omega}_j^i)d\theta|dx\\
\displaystyle\qquad \quad \leq C\sum\limits_{i,j}\int_{x_{j-1}}^{x_{j+1}}|\int_0^1{\bar{\varrho}_\theta}^{\frac{\xi^2-|\xi|}{1-\xi^2}}(\bar{\omega}^{\underline{i}}-\bar{\omega}_j^i)d\theta|dx\\
\displaystyle\qquad \quad \leq C\sum\limits_{i,j}\int_{x_{j-1}}^{x_{j+1}}|\int_0^1\bar{\varrho}_\theta^{\frac{\xi^2-|\xi|}{1-\xi^2}}(\bar{\omega}^{\underline{i}}-\bar{\omega}_j^i)d\theta|dx|_{\bar{\varrho}_\theta>1} \\
\displaystyle\qquad \quad\quad+ C\sum\limits_{i,j}\int_{x_{j-1}}^{x_{j+1}}|\int_0^1\bar{\varrho}_\theta^{\frac{\xi^2-|\xi|}{1-\xi^2}}(\bar{\omega}^{\underline{i}}-\bar{\omega}_j^i)d\theta|dx|_{\bar{\varrho}_\theta<1}\\
\displaystyle\qquad \quad \leq C\sum\limits_{i,j}\int_{x_{j-1}}^{x_{j+1}}|\bar{\omega}^{\underline{i}}-\bar{\omega}_j^i|dx+ C\sum\limits_{i,j}\int_{x_{j-1}}^{x_{j+1}}|\int_0^1\bar{\varrho}_\theta^{-\frac{1}{2}}|\bar{\omega}^{\underline{i}}-\bar{\omega}_j^i|d\theta dx\\
\displaystyle\qquad \quad \leq C(T,L)h^{-\frac{1}{2}}+C\sum\limits_{i,j}\int_{x_{j-1}}^{x_{j+1}}|\int_0^1\bar{\varrho}_\theta^{-\frac{1}{2}}|\bar{\omega}^{\underline{i}}-\bar{\omega}_j^i|d\theta dx.
\end{array}
\end{equation}
By Lemma \ref{31hs}, there yields
\begin{equation}\label{eq216}
\begin{aligned}
|\bar{\omega}^{\underline{i}}-\bar{\omega}_j^i|\leq C(|\bar{\varrho}^{\underline{i}}|^{\frac{3}{4}}+|\bar{\varrho}_j^i|^{\frac{3}{4}}).
\end{aligned}
\end{equation}
To get a more accurate estimate for the second terms in \eqref{eq217}, we divide following four cases.

{\bf Case \rmnum{1}. } If $0\leq\bar{\varrho}^{\underline{i}}-\bar{\varrho}_j^i\leq\bar{\varrho}_j^i$, then $|\bar{\omega}^{\underline{i}}-\bar{\omega}_j^i|\leq C|\bar{\varrho}_j^i|^{\frac{3}{4}}$, that is
\begin{equation}\label{eq218}
\begin{aligned}
|\bar{\varrho}_j^i|^{-\frac{1}{2}}\leq C|\bar{\omega}^{\underline{i}}-\bar{\omega}_j^i|^{-\frac{2}{3}}.
\end{aligned}
\end{equation}
We have
\begin{equation}\label{eq219}
\begin{aligned}
\sum\limits_{i,j}\int_{x_{j-1}}^{x_{j+1}}\int_0^1\bar{\varrho}_\theta^{-\frac{1}{2}}|\bar{\omega}^{\underline{i}}-\bar{\omega}_j^i|d\theta dx&\leq C\sum\limits_{i,j}\int_{x_{j-1}}^{x_{j+1}}|\bar{\omega}^{\underline{i}}-\bar{\omega}_j^i||\bar{\varrho}_j^i|^{-\frac{1}{2}}dx\\
&\leq C\sum\limits_{i,j}\int_{x_{j-1}}^{x_{j+1}}|\bar{\omega}^{\underline{i}}-\bar{\omega}_j^i|^{\frac{1}{3}}dx\leq Ch^{-\frac{5}{6}}.
\end{aligned}
\end{equation}

{\bf Case \rmnum{2}. } If $\bar{\varrho}_j^i\leq\bar{\varrho}^{\underline{i}}-\bar{\varrho}_j^i$, then $\bar{\varrho}^{\underline{i}}\leq2(\bar{\varrho}^{\underline{i}}-\bar{\varrho}_j^i)$, that is
\begin{equation}\label{eq220}
\begin{aligned}
|\bar{\varrho}^{\underline{i}}-\bar{\varrho}_j^i|^{-\frac{1}{2}}\leq C|\bar{\omega}^{\underline{i}}-\bar{\omega}_j^i|^{-\frac{2}{3}}.
\end{aligned}
\end{equation}
There is
\begin{equation}\label{eq221}
\begin{aligned}
\sum\limits_{i,j}\int_{x_{j-1}}^{x_{j+1}}\int_0^1\bar{\varrho}_\theta^{-\frac{1}{2}}|\bar{\omega}^{\underline{i}}-\bar{\omega}_j^i|d\theta dx&\leq C\sum\limits_{i,j}\int_{x_{j-1}}^{x_{j+1}}|\bar{\omega}^{\underline{i}}-\bar{\omega}_j^i||\bar{\varrho}^{\underline{i}}-\bar{\varrho}_j^i|^{-\frac{1}{2}}dx\\
&\leq
C\sum\limits_{i,j}\int_{x_{j-1}}^{x_{j+1}}|\bar{\omega}^{\underline{i}}-\bar{\omega}_j^i|^{\frac{1}{3}}dx\leq Ch^{-\frac{5}{6}}.
\end{aligned}
\end{equation}

{\bf Case \rmnum{3}. } If $0\leq\bar{\varrho}_j^i\leq\bar{\varrho}_j^i-\bar{\varrho}^{\underline{i}}\leq\bar{\varrho}^{\underline{i}}$, similarly to the Case \rmnum{1}, we have
\begin{equation}\label{eq222}
\begin{aligned}
\sum\limits_{i,j}\int_{x_{j-1}}^{x_{j+1}}\int_0^1\bar{\varrho}_\theta^{-\frac{1}{2}}|\bar{\omega}^{\underline{i}}-\bar{\omega}_j^i|d\theta dx&\leq C\sum\limits_{i,j}\int_{x_{j-1}}^{x_{j+1}}|\bar{\omega}^{\underline{i}}-\bar{\omega}_j^i||\bar{\varrho}^{\underline{i}}|^{-\frac{1}{2}}dx\\
&\leq C\sum\limits_{i,j}\int_{x_{j-1}}^{x_{j+1}}|\bar{\omega}^{\underline{i}}-\bar{\omega}_j^i|^{\frac{1}{3}}dx\leq Ch^{-\frac{5}{6}}.
\end{aligned}
\end{equation}

{\bf Case \rmnum{4}. } If $\bar{\varrho}^{\underline{i}}\leq\bar{\varrho}_j^i-\bar{\varrho}^{\underline{i}}$, similarly to the Case \rmnum{2}, we have
\begin{equation}\label{eq2221}
\begin{aligned}
\sum\limits_{i,j}\int_{x_{j-1}}^{x_{j+1}}\int_0^1\bar{\varrho}_\theta^{-\frac{1}{2}}|\bar{\omega}^{\underline{i}}-\bar{\omega}_j^i|d\theta dx&\leq C\sum\limits_{i,j}\int_{x_{j-1}}^{x_{j+1}}|\bar{\omega}^{\underline{i}}-\bar{\omega}_j^i||\bar{\varrho}_j^i-\bar{\varrho}^{\underline{i}}|^{-\frac{1}{2}}dx\\
&\leq
C\sum\limits_{i,j}\int_{x_{j-1}}^{x_{j+1}}|\bar{\omega}^{\underline{i}}-\bar{\omega}_j^i|^{\frac{1}{3}}dx\leq Ch^{-\frac{5}{6}}.
\end{aligned}
\end{equation}
From the above estimates \eqref{eq216}-\eqref{eq2221}, we obtain that
\begin{equation}\label{eq223}
\begin{array}{ll}
\displaystyle\sum\limits_{i,j}\int_{x_{j-1}}^{x_{j+1}}|\int_0^1\partial_{\bar{\omega}_\theta}\eta(\bar{v}_\theta)(\bar{\omega}^{\underline{i}}-\bar{\omega}_j^i)d\theta|dx\\
\displaystyle\qquad \quad \leq C(T,L)h^{-\frac{1}{2}}+C\sum\limits_{i,j}\int_{x_{j-1}}^{x_{j+1}}|\int_0^1\bar{\varrho}_\theta^{-\frac{1}{2}}|\bar{\omega}^{\underline{i}}-\bar{\omega}_j^i|d\theta dx\\
\displaystyle\qquad \quad \leq Ch^{-\frac{1}{2}}+Ch^{-\frac{5}{6}}\leq C(T,L)h^{-\frac{5}{6}}.
\end{array}
\end{equation}
Hence, we get
\begin{equation}\label{eq1214}
\begin{array}{ll}
|D_2(\varphi)| \leq C\|\varphi\|_{C^\alpha}l^\alpha (h^{-\frac{7}{8}}+h^{-\frac{5}{6}})\leq C\|\varphi\|_{C^\alpha}l^{\alpha-\frac{8}{9}}.
\end{array}
\end{equation}

{\bf step 3. } From \eqref{eq206}, \eqref{eq207} and \eqref{eq213}, we have
\begin{equation}\label{eq224}
\begin{aligned}
\|A+D_1+S\|_{(C_0(\Omega))^*}\leq C,
\end{aligned}
\end{equation}
where $C$ depends only on $\Omega$ and $\eta$. Owing to the above estimates, we can apply Lemma \ref{41hs} to get the $H^{-1}$ compactness.
For $1<p_1<2$, $(C_0(\Omega))^*\hookrightarrow W^{-1,p_1}(\Omega)$ is compact by the embedding theorem. That means $A+D_1+S$ is compact in $W^{-1,p_1}(\Omega)$.
For $0<\alpha<1-\frac{2}{p_2}$, $W_0^{1,p_2}(\Omega)\subset C^\alpha_0(\Omega)$ by the Sobolev theorem, that is
\begin{equation}\label{eq225}
\begin{aligned}
|D_2(\varphi)|\leq C\|\varphi\|_{C_0^\alpha(\Omega)}l^{\alpha-\frac{8}{9}}\leq C\|\varphi\|_{W_0^{1,p_2}(\Omega)}l^{\alpha-\frac{8}{9}},
\end{aligned}
\end{equation}
where $p_2>\frac{2}{1-\alpha}$.
We consider the duality that
\begin{equation}\label{eq226}
\begin{aligned}
\|D_2\|_{W^{-1,p_3}(\Omega)}\leq Cl^{\alpha-\frac{8}{9}}\rightarrow 0 \ \ \mbox{as}\ \ l\rightarrow 0, \  \mbox{for} \ \frac{8}{9}<\alpha<1\  \mbox{and}\ 1<p_3<\frac{2}{1+\alpha},
\end{aligned}
\end{equation}
then $D_2$ is compact in $W^{-1,p_3}(\Omega)$.
Therefore, $A+D+S=A+D_1+D_2+S$ is compact in $W^{-1,p}(\Omega)$, where $1\leq p\leq \min(p_1,p_3)$.

Furthermore, we have $\partial_t\eta(\bar{v})+\partial_xq(\bar{v})-B$ is bounded in $W^{-1,\infty}(\Omega)$, because of the uniform bound of $\bar{v}$ and the continuity of $\eta$ and $q$. And $\partial_t\eta(\bar{v})+\partial_xq(\bar{v})-B$ is bounded in $W^{-1,p_4}(\Omega)$ for $p_4>1$ by the boundedness of $\Omega$. Thus, $A+D+S$ is bounded in $W^{-1,p_4}$. From Lemma \ref{41hs}, we have $$A+D+S \ \mbox{is compact in}\  H^{-1}_{loc}(\Pi),$$ that is,
\begin{equation}\label{eq227}
\begin{aligned}
\partial_t\eta(\bar{v})+\partial_xq(\bar{v})-B  \ \mbox{is compact in }\ H^{-1}_{loc}(\Pi).
\end{aligned}
\end{equation}
Then $\partial_t\eta(\bar{v})+\partial_xq(\bar{v})$ is compact in $H^{-1}_{loc}(\Pi)$ for $\xi\in (-\frac{1}{2},\frac{1}{2})$.
\end{proof}

So far, combining Lemma \ref{31hs} with Theorem \ref{theorem41}, we have the compactness framework for the approximate solutions $\bar{v}^l(x,t)$ defined in section 3.

\section{Convergence and existence}\label{section5}

\setcounter{section}{5}

\setcounter{equation}{0}

\noindent  In this section, we use the compensated compactness framework to get a convergent subsequence. Then we show that the limit of this sequence is a mechanic entropy solution.

\vskip 0.1in

\begin{theorem}\label{theorem51} Suppose that $(\bar{\varrho}^l, \bar{\omega}^l)$ satisfy the conditions in Lemma \ref{31hs} and Theorem \ref{theorem41}. Then

$(i)$ there exists a convergent subsequence $\bar{v}^l(x,t)=(\bar{\varrho}^l(x,t),\bar{\omega}^l(x,t))$ such that
\begin{equation}\label{eq228}
\begin{aligned}
(\bar{\varrho}^l(x,t),\bar{\omega}^l(x,t))\rightarrow(\varrho(x,t),\omega(x,t)), \qquad \ \ \mbox{a.e. }
\end{aligned}
\end{equation}
for some positive constant $C=C(T,M)$, where the vector function $(\varrho(x,t),\omega(x,t))$ is a bounded measurable and 
satisfies
\begin{equation}\label{eq229}
\begin{aligned}
0\leq\varrho(x,t)\leq C,\quad |\omega(x,t)|\leq\varrho(x,t)(C+|\log\varrho(x,t)|).
\end{aligned}
\end{equation}

$(ii)$ $(\varrho(x,t),\omega(x,t))$ is an entropy solution of \eqref{eq120}, i.e. $$(\rho(x,t),m(x,t))=(\frac{\varrho(x,t)}{x^{N-1}},\frac{\omega(x,t)}{x^{N-1}})$$ is an entropy solution of \eqref{eq113} with \eqref{eq115}, \eqref{eq114}. Namaly, $$(\rho(\vec{x},t),\vec{m}(\vec{x},t))=(\rho(|\vec{x}|,t),m(|\vec{x}|,t)\frac{\vec{x}}{|\vec{x}|})$$ is a spherically symmetric entropy solution to the multi-dimensional Euler-Poisson equation \eqref{eq111} with  initial-boundary data \eqref{eq112}, \eqref{eq115} and \eqref{eq114}.
\end{theorem}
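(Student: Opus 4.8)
The plan is to combine the uniform bounds of Lemma \ref{31hs} with the $H^{-1}$ compactness of Theorem \ref{theorem41} through the Huang--Wang compactness framework (Theorem \ref{theorem0}), and then to pass to the limit $l\to 0$ in the approximate weak and entropy identities produced by the fractional Lax--Friedrichs scheme of Section 3. \textbf{Item (i).} By Lemma \ref{31hs} the family $\bar v^l=(\bar\varrho^l,\bar\omega^l)$ satisfies hypothesis (i) of Theorem \ref{theorem0} with a constant $C=C(T,M)$ independent of $l$, and by Theorem \ref{theorem41} the entropy dissipation measures $\partial_t\eta(\bar v^l)+\partial_x q(\bar v^l)$ associated with every weak entropy pair $(\eta,q)$ in \eqref{eq139} are compact in $H^{-1}_{loc}(\Pi)$, which is hypothesis (ii). Theorem \ref{theorem0} then yields a subsequence (not relabelled) and a bounded measurable $(\varrho,\omega)$ with $\bar v^l\to(\varrho,\omega)$ a.e. and in $L^p_{loc}$ for all $p\in[1,\infty)$, and \eqref{eq229} is precisely \eqref{eq138}. (The Young-measure reduction is internal to Theorem \ref{theorem0}; although Section 4 was written for $\xi\in(-\tfrac12,\tfrac12)$, the same estimates extend to $\xi\in(-1,1)$, and that restricted family already suffices for the framework of \cite{HW}.)

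\textbf{Item (ii), weak form.} For $\varphi\in C_0^\infty(\Pi_T)$ I would carry out the standard consistency analysis of the scheme: integrating the discrete balance laws against $\varphi$ over the cells $R_j^i$, invoking the divergence theorem, the exact Rankine--Hugoniot relations inside each Riemann fan, the fractional step \eqref{eq170}, the re-averaging steps \eqref{eq154}, \eqref{eq162}, \eqref{eq163}, and the cut-off \eqref{eq171}, the approximate identity for $\bar v^l$ equals the exact weak identity plus error terms of three kinds --- the re-averaging errors (handled by the quadratic estimate \eqref{eq188} together with Lemmas \ref{6hs}, \ref{3hs} and Cauchy--Schwarz), the fractional-step remainder (of order $O(l^{1/2})$ as in \eqref{eq1208}), and the cut-off error on $\{\varrho^l\le l^\beta\}$ (of order $O(l^{\beta-1}(1+|\log l|))\to 0$) --- all of which vanish as $l\to 0$. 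Using the strong $L^p_{loc}$ convergence of item (i) (the weighted $L^\infty$ bound providing domination), $(\varrho,\omega)$ solves \eqref{eq120} weakly; the nonlocal source passes to the limit because $\int_1^{+\infty}\varrho^l(s,t)\,ds\le C$ uniformly by \eqref{eq181a}, so that $\int_1^x\varrho^l(\cdot,t)\,ds\to\int_1^x\varrho(\cdot,t)\,ds$ in $L^1_{loc}$ while $\varrho^l\to\varrho$, and the geometric term $\tfrac{N-1}{x}\varrho$ is linear. Setting $\rho=\varrho/x^{N-1}$ and $m=\omega/x^{N-1}$ (legitimate since $x\ge 1$), this gives \eqref{eq140} together with the bound \eqref{eq142}.

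\textbf{Item (ii), entropy inequality, boundary trace and conclusion.} To obtain \eqref{eq008} I would repeat the Green-formula computation \eqref{eq189}--\eqref{eq192} with the mechanical pair $(\eta_e,q_e)$ of \eqref{eq13391} (in the $(\rho,m)$ formulation of \eqref{eq118}) in place of the $\xi$-entropy: convexity of $\eta_e$ gives the shock terms the right sign, the fractional step contributes $\nabla\eta_e\cdot g$ plus a quadratic remainder controlled as above, and the cut-off is harmless because $\eta_e$ is continuous with $\eta_e|_{\varrho=0}=0$; passing to the limit with the a.e. convergence and the bound \eqref{eq229} (which, as one checks, dominates $\eta_e(\bar v^l)$ and $q_e(\bar v^l)$) yields \eqref{eq008} for every non-negative $\psi\in C_0^\infty(\Pi_T)$. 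For \eqref{eq08}, near $x=1$ the scheme solves the initial--boundary Riemann problems \eqref{eq169}, \eqref{eq167} with $\omega|_{x=1}=0$, so by Lemma \ref{1hs} the near-boundary momentum is $o(1)$ as $x\downarrow 1$ uniformly in $l$, whence $\tfrac1\varepsilon\int_1^{1+\varepsilon}\omega(x,t)\,dx\to 0$ in $L^\infty_{loc}$ weak-$*$. Finally, defining $\Phi$ by \eqref{eq117} (well posed and satisfying \eqref{eq115} thanks to the uniform mass bound \eqref{eq116}), the field $(\rho(|\vec x|,t),\,m(|\vec x|,t)\tfrac{\vec x}{|\vec x|},\,\Phi(|\vec x|,t))$ is the desired spherically symmetric entropy solution of \eqref{eq111}.

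\textbf{Main obstacle.} I expect the delicate stage to be the last one: producing a genuine \emph{approximate} entropy inequality for the mechanical entropy $\eta_e$ (which is not of the form \eqref{eq139}) with all re-averaging, cut-off and Rankine--Hugoniot errors controlled uniformly in $l$, and verifying that both the nonlocal Poisson contribution and the near-boundary momentum converge to the correct limits so that \eqref{eq008} and \eqref{eq08} survive the passage to the limit; once item (i) is available, the remaining limits are routine.
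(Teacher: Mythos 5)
Your overall strategy coincides with the paper's: item (i) is exactly the combination of Lemma \ref{31hs}, Theorem \ref{theorem41} and the Huang--Wang framework (Theorem \ref{theorem0}), and for item (ii) the paper performs precisely the consistency analysis you outline, decomposing the approximate weak and entropy identities into re-averaging, fractional-step and cut-off errors (the terms $\Rmnum{1}$--$\Rmnum{4}$, $J_1$--$J_4$, and $A,B,D,S$) and controlling them with \eqref{eq188}, Lemmas \ref{3hs} and \ref{6hs}, and the uniform mass bound \eqref{eq181a}. You correctly flag the hardest consistency term --- in the paper it is $J_4^3$, the error from freezing the nonlocal source at $t=t_i$, handled by the $\delta$-splitting over intermediate constant states in \eqref{eq254}--\eqref{eq256} together with the identity \eqref{eq258} that converts $\int_1^x(\varrho^l-\varrho^{l\underline i})\,ds$ into a time integral of $\omega^l$ via the mass equation.

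The one genuine gap is your treatment of the boundary condition \eqref{eq08}. You assert that ``by Lemma \ref{1hs} the near-boundary momentum is $o(1)$ as $x\downarrow 1$ uniformly in $l$.'' Lemma \ref{1hs} only gives the invariant-region bounds $w\le\max\{w_+,-z_+\}$, $z\ge\min\{0,z_+\}$ for the boundary Riemann solution; these constrain the Riemann invariants but do not force $\omega$ to be small at any positive distance from $x=1$ (a wave emanating from the boundary can carry $\omega$ away from zero immediately inside the strip), and in any case a.e.\ convergence of $\bar\omega^l$ does not transmit pointwise boundary values of the approximations to the limit. The paper instead obtains \eqref{eq08} \emph{after} passing to the limit: the limiting weak form \eqref{eq232} of the mass equation, valid for test functions not vanishing at $x=1$ and containing no boundary term, shows that the normal trace of the divergence-measure field $(\varrho,\omega)$ on $\{x=1\}$ vanishes, and the Chen--Frid trace theory \cite{CF} then yields $\omega|_{x=1}=0$ in the sense of \eqref{eq08}. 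Your argument as written does not close this step; replacing it by the trace argument (which you have already set up, since you prove \eqref{eq232}) repairs it.
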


\vskip 0.1in

\begin{proof}
By the Theorem \ref{theorem0}, Lemma \ref{31hs} and Theorem \ref{theorem41}, we obtain a convergent subsequence $\bar{v}^l(x,t)$ such that
\begin{equation}\label{eq230}
\begin{aligned}
(\bar{\varrho}^l(x,t),\bar{\omega}^l(x,t))\rightarrow(\varrho(x,t),\omega(x,t)) \qquad \ \ \mbox{a.e. }
\end{aligned}
\end{equation}
and $(\varrho(x,t),\omega(x,t))$ satisfies
\begin{equation}\label{eq231}
\begin{aligned}
0\leq\varrho(x,t)\leq C,\quad |\omega(x,t)|\leq\varrho(x,t)(C+|\log\varrho(x,t)|)
\end{aligned}
\end{equation}
for some constant $C=C(T,M)>0$ and $0\leq t \leq T$.

For any function $\varphi, \psi\ge0\in C_0^\infty({\Pi}_T)$, 
 there exist $L>0$ and $T>0$ such that $$\mbox{supp}\varphi, \mbox{supp}\psi\subset[1,L)\times[0,T).$$ Then we need to prove that the function $(\varrho(x,t),\omega(x,t))$ satisfies the standard notion of the weak entropy solution in Definition \ref{def1} for any test function $\varphi(x,t)$ and $\psi(x,t)\ge0$. And $(\eta_e, q_e)$ is the mechanical entropy and the corresponding flux as defined in \eqref{eq13391}.

Firstly, we claim that
\begin{equation}\label{eq232}
\begin{aligned}
\iint\limits_{\Pi_T}(\varrho\varphi_t+\omega\varphi_x)dxdt+\int_1^{+\infty}\varrho_0(x)\varphi(x,0)dx=0.
\end{aligned}
\end{equation}
Indeed, for the convergent subsequence, we denote that
\begin{equation}\label{eq233}
\begin{array}{ll}
\displaystyle\iint\limits_{\Pi_T}(\bar{\varrho}^l\varphi_t+\bar{\omega}^l\varphi_x)dxdt+\int_1^{+\infty}\bar{\varrho}^l_0(x)\varphi(x,0)dx\\
=\displaystyle\sum\limits_{i,j}\int_{x_{j-1}}^{x_{j+1}}(\underline{\varrho}^{l\underline{i}}-\bar{\varrho}_j^i)\varphi^idx+\iint\limits_{\Pi_T}(\bar{\varrho}^l-\underline{\varrho}^l)\varphi_tdxdt+\iint\limits_{\Pi_T}(\bar{\omega}^l-\underline{\omega}^l)\varphi_xdxdt\\
\displaystyle\qquad\qquad +\int_1^{+\infty}(\bar{\varrho}^l(x,0)-\underline{\varrho}^l(x,0))\varphi(x,0)dx\\
=\Rmnum{1}+\Rmnum{2}+\Rmnum{3}+\Rmnum{4},
\end{array}
\end{equation}
where $\underline{\varrho}^{l\underline{i}}=\underline{\varrho}^{h}(x,t_i-0)$ and $\varphi^i=\varphi(x,t_i)$.
There exists
\begin{equation}\label{eq234}
\begin{array}{ll}
|\Rmnum{1}|=\displaystyle|\sum\limits_{i,j}\int_{x_{j-1}}^{x_{j+1}}(\underline{\varrho}^{l\underline{i}}-\bar{\varrho}_j^i)\varphi^idx|\\
\displaystyle\qquad \leq|\sum\limits_{i,j}\int_{x_{j-1}}^{x_{j+1}}(\varrho^{l\underline{i}}-\bar{\varrho}^{l\underline{i}})\varphi^idx|+|\sum\limits_{i,j}\int_{x_{j-1}}^{x_{j+1}}(\bar{\varrho}^{l\underline{i}}-\bar{\varrho}_j^i)\varphi^idx|\\
\displaystyle\qquad \leq C\sum\limits_{i,j}\int_{x_{j-1}}^{x_{j+1}}l^\beta\|\varphi\|_\infty dx+|\sum\limits_{i,j}\int_{x_{j-1}}^{x_{j+1}}(\bar{\varrho}^{l\underline{i}}-\bar{\varrho}_j^i)\varphi_j^idx|\\
\displaystyle\qquad\quad+|\sum\limits_{i,j}\int_{x_{j-1}}^{x_{j+1}}(\bar{\varrho}^{l\underline{i}}-\bar{\varrho}_j^i)(\varphi^i-\varphi_j^i)dx|\\
\displaystyle\qquad \leq C(L)\|\varphi\|_\infty l^{\beta}\frac{T}{h}+0+C(T,L)\|\varphi\|_{C_0^\alpha}l^\alpha h^{-\frac{1}{2}}\\
\qquad \leq C(T,L)[l^{\beta-1}(1+|\log l|)+l^{\alpha-\frac{1}{2}}(1+|\log l|)^{\frac{1}{2}}]\rightarrow 0 \qquad \ \ \mbox{as} \quad l\rightarrow0,
\end{array}
\end{equation}
for $\frac{8}{9}<\alpha<1$ and $3\leq\beta\leq4$, where $\varphi_j^i=\varphi(x_j,t_i)$. Also,
\begin{equation}\label{eq235}
\begin{array}{ll}
\displaystyle|\Rmnum{2}|=|\iint\limits_{\Pi_T}(\bar{\varrho}^l-\underline{\varrho}^l)\varphi_tdxdt|\leq C(L,T)l^\beta\rightarrow 0,\\
\end{array}
\end{equation}
and
\begin{equation}\label{eq236}
\begin{array}{ll}
\displaystyle|\Rmnum{3}|=|\iint\limits_{\Pi_T}(\bar{\omega}^l-\underline{\omega}^l)\varphi_xdxdt|=|\sum\limits_{i,j}\int_{t_i}^{t_{i+1}}\int_{x_{j-1}}^{x_{j+1}}g_2(\underline{v}^l)(t-t_i)\varphi_xdxdt|\\
\displaystyle\leq h|\sum\limits_{i,j}\int_{t_i}^{t_{i+1}}\int_{x_{j-1}}^{x_{j+1}}(\frac{N-1}{x}\underline{\varrho}^l-\frac{\underline{\varrho}^l}{x^{N-1}}\int_0^x\underline{\varrho}^l(s,t)ds)\varphi_xdxdt|\\
 \displaystyle\leq Ch\iint\limits_{\Pi_T}|\varphi_x|dxdt
 \leq Cl^\frac{1}{2}\rightarrow 0\qquad \ \ \mbox{as} \quad l\rightarrow0.
\end{array}
\end{equation}
From \eqref{eq154} and \eqref{eq188}, we have
\begin{equation}\label{eq237}
\begin{array}{ll}
\displaystyle|\Rmnum{4}|=|\int_1^{+\infty}(\bar{\varrho}^l(x,0)-\underline{\varrho}^l(x,0))\varphi(x,0)dx|\\
\displaystyle\qquad \leq|\sum\limits_{j}\int_{x_{j-1}}^{x_{j+1}}(\bar{\varrho}^l(x,0)-\underline{\varrho}^l(x,0))\varphi(x_j,0)dx|\\
\displaystyle\qquad \qquad+|\sum\limits_{j}\int_{x_{j-1}}^{x_{j+1}}(\bar{\varrho}^l(x,0)-\underline{\varrho}^l(x,0))(\varphi(x,0)-\varphi(x_j,0))dx|\\
\displaystyle\qquad\leq 0+Cl\|\varphi\|_{C^1}\sum\limits_{j}\int_{x_{j-1}}^{x_{j+1}}|\bar{\varrho}^l(x,0)-\underline{\varrho}^l(x,0)|dx\\
\qquad \leq Clh^{-\frac{1}{2}}\leq Cl^{\frac{1}{4}}\rightarrow 0\ \ \mbox{as} \quad l\rightarrow0.
\end{array}
\end{equation}
From the above inequalities, we have 
\begin{equation}\label{eq238}
\begin{aligned}
\iint\limits_{\Pi_T}(\bar{\varrho}^l\varphi_t+\bar{\omega}^l\varphi_x)dxdt+\int_1^{+\infty}\bar{\varrho}^l(x,0)\varphi(x,0)dx\rightarrow 0\ \ \mbox{as} \quad l\rightarrow0.
\end{aligned}
\end{equation}
In addition, notice that
\begin{equation}\label{eq239}
\begin{aligned}
|\int_1^{+\infty}&\bar{\varrho}^l(x,0)\varphi(x,0)dx-\int_1^{+\infty}\varrho_0(x)\varphi(x,0)dx|\\
&\leq l^\beta\int_1^{+\infty}|\varphi(x,0)|dx\rightarrow 0\ \ \mbox{as} \quad l\rightarrow0.
\end{aligned}
\end{equation}
Thus, combining \eqref{eq230}, \eqref{eq238} - \eqref{eq239} with the dominated convergence theorem, we get \eqref{eq232}.

From \eqref{eq232}, we know
\begin{equation}\label{eq241}
\begin{aligned}
\omega(x,\cdot) \xrightarrow{*} 0\quad \ \mbox{weakly },\qquad \ \ \mbox{as } \ \ x\rightarrow1.
\end{aligned}
\end{equation}
Combining \eqref{eq241} with the function $\omega$ which has a well-defined trace in $L^\infty$, we use the trace theorem in the theory of divergence-measure fields to obtain
\begin{equation}\label{eq242}
\begin{aligned}
\omega|_{x=1}= 0
\end{aligned}
\end{equation}
in the sense of traces introduced by Chen-Frid in \cite{CF}. It means \eqref{eq08} holds.

Now, we show that
\begin{equation}\label{eq240}
\begin{array}{ll}
\displaystyle\iint\limits_{\Pi_T}\big[\omega\varphi_t+(\frac{\omega^2}{\varrho}+\varrho)\varphi_x+(\frac{N-1}{x}\varrho-\frac{\varrho}{x^{N-1}}\int_1^x\varrho(s, t)ds)\varphi\big]dxdt\\
\displaystyle\qquad \quad+\int_1^{+\infty}\omega_0(x)\varphi(x,0)dx=0.
\end{array}
\end{equation}
By the Green formula, there exists
\begin{equation}\label{eq243}
\begin{aligned}
\displaystyle\iint\limits_{\Pi_T}\big(\underline{\omega}^l\varphi_t+f_2(\underline{v}^l)\varphi_x\big)dxdt+\int_1^{+\infty}\underline{\omega}^l(x,0)\varphi(x,0)dx\\
=\sum\limits_{i,j}\int_{x_{j-1}}^{x_{j+1}}(\underline{\omega}^{l\underline{i}}-\bar{\omega}^i_j)\varphi^idx,
\end{aligned}
\end{equation}
where $f_2$ defined in \eqref{eq122}. Since $\bar{\omega}^l=\omega^l$ and $g_2$ defined in \eqref{eq122}, we have
\begin{equation}\label{eq244}
\begin{aligned}
\displaystyle\iint\limits_{\Pi_T}\big(\bar{\omega}^l\varphi_t+f_2(\bar{v}^l)\varphi_x+g_2(\bar{v}^l)\varphi\big) dxdt+\int_1^{+\infty}\bar{\omega}^l(x,0)\varphi(x,0)dx\\
\qquad=J_1(\varphi)+J_2(\varphi)+J_3(\varphi)+J_4(\varphi),
\end{aligned}
\end{equation}
where
\begin{equation}\label{eq245}
\begin{aligned}
\displaystyle &J_1(\varphi)=\iint\limits_{\Pi_T}\big(f_2(\bar{v}^l)-f_2(v^l))\varphi_x+(g_2(\bar{v}^l)-g_2(v^l))\varphi\big)dxdt,\\
\displaystyle &J_2(\varphi)=\iint\limits_{\Pi_T}\big(f_2(v^l)-f_2(\underline{v}^l))\varphi_x+(g_2({v}^l)-g_2(\underline{v}^l))\varphi+(\omega^l-\underline{\omega}^l)\varphi_t\big)dxdt,\\ \displaystyle &J_3(\varphi)=\int_1^{+\infty}\big(\bar{\omega}^l(x,0)-\underline{\omega}^l(x,0)\big)\varphi(x,0)dx,\\
\displaystyle &J_4(\varphi)=\sum\limits_{i,j}\int_{x_{j-1}}^{x_{j+1}}(\underline{\omega}^{l\underline{i}}-\bar{\omega}^i_j)\varphi^idx+\iint\limits_{\Pi_T}g_2(\underline{v}^l)\varphi dxdt.
\end{aligned}
\end{equation}
Since $3\leq\beta\leq4$, we have
\begin{equation}\label{eq246}
\begin{aligned}
 |J_1(\varphi)|&\displaystyle\leq\iint\limits_{\Pi_T}\big((|\frac{(\bar{\omega}^l)^2}{\bar{\varrho}^l}-\frac{(\omega^l)^2}{\varrho^l}|+|\bar{\varrho}^l-\varrho^l|)\varphi_x+|\frac{N-1}{x}(\bar{\varrho}^l-\varrho^l)\\
&\displaystyle \qquad -\frac{1}{x^{N-1}}(\bar{\varrho}^l\int_1^x\bar{\varrho}^l(s,t)ds-\varrho^l\int_1^x\varrho^l(s,t)ds)|\varphi \big)dxdt|_{\varrho^l\leq l^\beta}\\
&\displaystyle \leq C\iint\limits_{\Pi_T}\big(\bar{\varrho}^l(1+|\log\bar{\varrho}^l|)^2+l^\beta+Ml^\beta\big)dxdt|_{\varrho^l\leq l^\beta}\\
&\displaystyle\leq C(L,T)l^\beta(1+|\log l|)^2\frac{1}{h}\leq Cl^{\beta-1}(1+|\log l|)^3  \ \ \rightarrow0 \quad  \ \ \mbox{as } \ \ l\rightarrow0,
\end{aligned}
\end{equation}
\begin{equation}\label{eq247}
\begin{aligned}
\displaystyle|J_2(\varphi)|&\leq\sum\limits_{i,j}\int_{t_i}^{t_{i+1}}\int_{x_{j-1}}^{x_{j+1}}\big((|\frac{({\omega}^l)^2}{{\varrho}^l}-\frac{(\underline{\omega}^l)^2}{\underline{\varrho}^l}|+|{\varrho}^l-\underline{\varrho}^l|)\varphi_x+|\frac{N-1}{x}({\varrho}^l-\underline{\varrho}^l)\\
&\displaystyle\qquad -\frac{1}{x^{N-1}}({\varrho}^l\int_1^x{\varrho}^l(s,t)ds-\underline{\varrho}^l\int_1^x\underline{\varrho}^l(s,t)ds)|\varphi+(\omega^l-\underline{\omega}^l)\varphi_t \big)dxdt\\
&\displaystyle \leq C\iint\limits_{\Pi_T}(\omega^l-\underline{\omega}^l)\big[(1+|\log\underline{\varrho}^l|)|\varphi_x|+|\varphi_t|\big]dxdt\\
&\displaystyle= C\iint\limits_{\Pi_T}h\frac{|g_2(\underline{v}^l)|}{\underline{\varrho}^l}\underline{\varrho}^l\big[(1+|\log\underline{\varrho}^l|)|\varphi_x|+|\varphi_t|\big]dxdt\\
& \leq Ch(1+|\log l^\beta|)+Ch\leq Cl(1+|\log l|)  \ \ \rightarrow0 \qquad  \ \ \mbox{as } \ \ l\rightarrow0,
\end{aligned}
\end{equation}
\begin{equation}\label{eq248}
\begin{aligned}
\displaystyle|J_3(\varphi)|&\leq\int_1^{+\infty}\big(\bar{\omega}^l(x,0)-\underline{\omega}^l(x,0)\big)\varphi(x_j,0)+\big(\bar{\omega}^l(x,0)-\underline{\omega}^l(x,0)\big)\big(\varphi(x,0)-\varphi(x_j,0)\big)dx,\\
&\displaystyle\leq 0+Cl\sum\limits_{j}\int_{x_{j-1}}^{x_{j+1}}{\varrho}^l(1+|\log{\varrho}^l|)(x,0)|\frac{\varphi(x,0)-\varphi(x_j,0)}{l}|dx \\
& \leq Cl^\frac{1}{2}\|\varphi\|_{C^1} \ \ \rightarrow0 \qquad\qquad  \ \ \mbox{as } \ \ l\rightarrow0,
\end{aligned}
\end{equation}
\begin{equation}\label{eq249}
\begin{aligned}
\displaystyle J_4&(\varphi)=\sum\limits_{i,j}\int_{x_{j-1}}^{x_{j+1}}(\underline{\omega}^{l\underline{i}}-\bar{\omega}^i_j)\varphi^i+({\omega}^i_j-{\omega}^{l\underline{i}})\varphi ^i_jdx+\iint\limits_{\Pi_T}g_2(\underline{v}^l)\varphi dxdt\\
\displaystyle &=\sum\limits_{i,j}\int_{x_{j-1}}^{x_{j+1}}({\omega}^{l\underline{i}}-\bar{\omega}^i_j)(\varphi^i-\varphi^i_j)dx+\sum\limits_{i,j}\int_{t_i}^{t_{i+1}}\int_{x_{j-1}}^{x_{j+1}}g_2(\underline{v}^l)(\varphi-\varphi^i)dxdt\\
\displaystyle&\qquad \qquad +\sum\limits_{i,j}\int_{t_i}^{t_{i+1}}\int_{x_{j-1}}^{x_{j+1}}\big(g_2(\underline{v}^l)-g_2(\underline{v}^{l\underline{i}})\big)\varphi^i dxdt\\
& =J^1_4(\varphi)+J^2_4(\varphi)+J^3_4(\varphi).
\end{aligned}
\end{equation}
From \eqref{eq188}, we have
\begin{equation}\label{eq250}
\begin{aligned}
\displaystyle|J^1_4(\varphi)|&=|\sum\limits_{i,j}\int_{x_{j-1}}^{x_{j+1}}l({\omega}^{l\underline{i}}-\bar{\omega}^i_j)\frac{(\varphi^i-\varphi^i_j)}{l}dx|\\
&\leq Cl\|\varphi\|_{C^1}\sum\limits_{i,j}\int_{x_{j-1}}^{x_{j+1}}|{\omega}^{l\underline{i}}-{\omega}^i_j|dx\\
 &\leq Clh^{-\frac{1}{2}}\leq Cl^\frac{1}{2}(1+|\log l|)^\frac{1}{2}   \ \ \rightarrow0 \qquad\qquad  \ \ \mbox{as } \ \ l\rightarrow0.
\end{aligned}
\end{equation}
By Lemma \ref{31hs}, we get
\begin{equation}\label{eq251}
\begin{array}{ll}
\displaystyle|J^2_4(\varphi)|\leq\sum\limits_{i,j}\int_{t_i}^{t_{i+1}}\int_{x_{j-1}}^{x_{j+1}}h|g_2(\underline{v}^l)\frac{\varphi-\varphi^i}{t-t_i}|dxdt\\
\displaystyle\qquad \quad \leq Ch\sum\limits_{i,j}\int_{t_i}^{t_{i+1}}\int_{x_{j-1}}^{x_{j+1}}|\frac{N-1}{x}\underline{\varrho}^l-\frac{\underline{\varrho}^l}{x^{N-1}}\int_1^x\underline{\varrho}^l(s,t)ds||\frac{\varphi-\varphi^i}{t-t_i}|dxdt\\
\displaystyle\qquad \quad \leq Ch\sum\limits_{i,j}\int_{t_i}^{t_{i+1}}\int_{x_{j-1}}^{x_{j+1}}|C+C(M)\underline{\varrho}^l||\frac{\varphi-\varphi^i}{t-t_i}|dxdt\\
\qquad \quad \leq Cl\|\varphi\|_{C^1}\ \ \rightarrow0 \qquad\qquad  \ \ \mbox{as } \ \ l\rightarrow0,
\end{array}
\end{equation}
and
\begin{equation}\label{eq252}
\begin{aligned}
\displaystyle|J^3_4(\varphi)|&\leq\sum\limits_{i,j}\int_{t_i}^{t_{i+1}}\int_{x_{j-1}}^{x_{j+1}}\big|\big(g_2(\underline{v}^l)-g_2(\underline{v}^{l\underline{i}})\big)\varphi^i \big| dxdt\\
\displaystyle&\leq \sum\limits_{i,j}\int_{t_i}^{t_{i+1}}\int_{x_{j-1}}^{x_{j+1}}|\frac{N-1}{x}(\underline{\varrho}^l-\underline{\varrho}^{l\underline{i}})\\
&\qquad\quad -\frac{1}{x^{N-1}}(\underline{\varrho}^l\int_1^x\underline{\varrho}^l(s,t)ds-\underline{\varrho}^{l\underline{i}}\int_1^x\underline{\varrho}^{l\underline{i}}(s,t)ds)||\varphi^i|dxdt\\
\displaystyle&\leq C\|\varphi\|_\infty\sum\limits_{i,j}\int_{t_i}^{t_{i+1}}\int_{x_{j-1}}^{x_{j+1}}(|{\varrho}^l-{\varrho}^{l\underline{i}}|+|\int_1^x{\varrho}^l(s,t)-{\varrho}^{l\underline{i}}(s,t)ds|)dxdt\\
\displaystyle&\leq C\big(\sum\limits_{i,j}\int_{t_i}^{t_{i+1}}\int_{x_{j-1}}^{x_{j+1}}|{\varrho}^l-{\varrho}^{l\underline{i}}|^2dxdt\big)^{\frac{1}{2}}\\
&\qquad\quad +C\sum\limits_{i,j}\int_{t_i}^{t_{i+1}}\int_{x_{j-1}}^{x_{j+1}}|\int_1^x{\varrho}^l(s,t)-{\varrho}^{l\underline{i}}(s,t)ds|dxdt.
\end{aligned}
\end{equation}

In the rectangle $\Pi_i=[x_{j-1},x_{j+1}]\times[t_{i-1},t_i)$, $i+j$ is even, $(\varrho(x,t),\omega(x,t))$ is the Riemann solution of the Riemann problem as described in \eqref{eq165} $-$ \eqref{eq167}, 
with $(\varrho^l, \omega^l)$ and $(\varrho^r, \omega^r)$ are the left state and right state, respectively. Since the monotonicity of rarefaction waves, we regard its intermediate state as some appropriate constants. We call both of these appropriate constants (if they exist) and the intermediate state between two waves as the intermediate constant state $(\varrho^m,\omega^m)$. 
We choose $\lambda_i=\frac{\omega}{\varrho}\pm1\leq C(T,M)+\beta|log l|+1$  and $h$ defined as \eqref{eq148}. When $0<l\leq l_0(T)$, the ratio of length of the interval of left state or right state and $2l$ are both bigger than $\frac{1}{3}$. For any given $\delta$, there are two cases needed to be considered.

{\bf Case 1. } The ratio of length of the interval of intermediate constant state and $2l$ is smaller than $\delta$. 
By Lemma \ref{3hs}, we have
\begin{equation}\label{eq254}
\begin{array}{ll}
&\displaystyle\sum\limits_{i,j}\int_{t_i}^{t_{i+1}}\int_{x_{j-1}}^{x_{j+1}}|{\varrho}^l-{\varrho}^{l\underline{i}}|^2dxdt\\
&\displaystyle\leq C\sum\limits_{i,j}\int_{t_i}^{t_{i+1}}l\big[\delta(\varrho^l-\varrho^m)^2+\delta(\varrho^r-\varrho^m)^2+(\varrho^l-\varrho^r)^2\big]dt\\
&\displaystyle\leq Cl\delta+C\sum\limits_{i,j}\int_{t_i}^{t_{i+1}}l\int_{x_{j-1}}^{x_{j+1}}|{\varrho}^{l\underline{i}}-{\varrho}^i_j|^2dxdt\\
&\leq C \delta+Ch\leq C\delta+Cl.
\end{array}
\end{equation}

{\bf Case 2. } The ratio of the interval of intermediate constant state and $2l$ is bigger than $\delta$, by Lemma \ref{3hs}, Lemma \ref{6hs} and Lemma \ref{31hs}, there exists
\begin{equation}\label{eq255}
\begin{array}{ll}
\displaystyle\sum\limits_{i,j}\int_{t_i}^{t_{i+1}}\int_{x_{j-1}}^{x_{j+1}}|{\varrho}^l-{\varrho}^{l\underline{i}}|^2dxdt\leq Ch\sum\limits_{i,j}\int_{x_{j-1}}^{x_{j+1}}\sum|\epsilon({\varrho}^{l\underline{i}})|^2dx\\
\displaystyle\quad\quad \leq Cl\delta^{-1}\sum\limits_{i,j}\int_{x_{j-1}}^{x_{j+1}}({\varrho}^{l\underline{i}}-\varrho_j^i)^2dx\leq Cl\delta^{-1},
\end{array}
\end{equation}
where $\sum|\epsilon({\varrho}^{l\underline{i}})|$ denotes the jump strengths of $\varrho^l(x,t)$ across shock waves in $\Pi_i$.

Thus, there is
\begin{equation}\label{eq256}
\begin{array}{ll}
\displaystyle\big(\sum\limits_{i,j}\int_{t_i}^{t_{i+1}}\int_{x_{j-1}}^{x_{j+1}}|{\varrho}^l-{\varrho}^{l\underline{i}}|^2dxdt\big)^\frac{1}{2}\leq C (\delta+l+l\delta^{-1})^\frac{1}{2}\leq C\delta^\frac{1}{2}\ \ \ \ \mbox{as }  \ l\rightarrow0,\\
\end{array}
\end{equation}
for any small $\delta>0.$

Because of the nonlocal influence of the gravitational field, from \eqref{eq120}$_1$, \eqref{eq126} and \eqref{eq174}, the remaining term of \eqref{eq252} can be estimated by 
\begin{equation}\label{eq258}
\begin{array}{ll}
\displaystyle\sum\limits_{i,j}\int_{t_i}^{t_{i+1}}\int_{x_{j-1}}^{x_{j+1}}|\int_1^x{\varrho}^l(s,t)-{\varrho}^{l\underline{i}}(s,t)ds|dxdt\\
\displaystyle\qquad \quad = \sum\limits_{i,j}\int_{t_i}^{t_{i+1}}\int_{x_{j-1}}^{x_{j+1}}|\int_1^x\int_t^{ih-0}({\varrho}^l)_\tau(s,\tau)d\tau ds|dxdt\\
\displaystyle\qquad \quad =\sum\limits_{i,j}\int_{t_i}^{t_{i+1}}\int_{x_{j-1}}^{x_{j+1}}|\int_t^{ih-0}\int_1^x({\omega}^l)_s(s,\tau)dsd\tau |dxdt\\
\displaystyle\qquad \quad =\sum\limits_{i,j}\int_{t_i}^{t_{i+1}}\int_{x_{j-1}}^{x_{j+1}}|\int_t^{ih-0}{\omega}^l(x,\tau)d\tau |dxdt\\
\qquad \quad \leq Ch\leq C\frac{l}{1+|\log l|}.
\end{array}
\end{equation}
Therefore, we obtain
\begin{equation}\label{eq259}
\begin{array}{ll}
|J^3_4(\varphi)|\leq (C \delta+Cl+Cl\delta^{-1}+C\frac{l}{1+|\log l|})^\frac{1}{2}\leq C\delta^\frac{1}{2} \ \ \ \ \mbox{as } \ l\rightarrow0,\\
\end{array}
\end{equation}
for any small $\delta$.

Using the dominated convergence theorem and
\begin{equation}\label{eq260}
\begin{array}{ll}
\displaystyle\int_1^{+\infty}\big({\bar{\omega}}^l(x,0)-{\omega}_0(x)\big)\varphi(x,0)dx=0,
\end{array}
\end{equation}
we conclude \eqref{eq240}.

Finally, we show the entropy inequality:
\begin{equation}\label{eq261}
\begin{array}{ll}
\displaystyle\iint\limits_{\Pi_T}&\big[\eta_e(v)\psi_t+q_e(v)\psi_x+\big(\frac{N-1}{x}\varrho-\frac{\varrho}{x^{N-1}}\int_1^x\varrho(s,t)ds\big)\eta_{e\omega}(v)\psi\big]dxdt\\
&+\int_1^{+\infty}\eta_e(v_0(x))\psi(x,0)dx\geq0.
\end{array}
\end{equation}

 Similar to $A(\psi)$, $B(\psi)$, $D(\psi)$ and $S(\psi)$ in step 2 of the section 4, we define the identity as:
\begin{equation}\label{eq262}
\begin{aligned}
\iint\limits_{\Pi_T}(\eta_e(\bar{v}^l)\psi_t+q_e(\bar{v}^l)\psi_x)dxdt=A(\psi)+B(\psi)+D(\psi)+S(\psi).
\end{aligned}
\end{equation}
From \eqref{eq199} and \eqref{eq200}, we know
\begin{equation}\label{eq264}
\begin{array}{ll}
\displaystyle A(\psi)=\sum\limits_{i,j}\int_{x_{j-1}}^{x_{j+1}}(\eta_e(\underline{v}^{l\underline{i}})-\eta_e(v^{l\underline{i}}))\psi^i(x)dx+\sum\limits_{i,j}\int_{x_{j-1}}^{x_{j+1}}(\eta_e(v^{l\underline{i}})-\eta_e(\bar{v}^{l\underline{i}}))\psi^i(x)dx,\\
\displaystyle\quad \geq- Cl^\frac{\beta}{2}\frac{1}{h}-h\sum\limits_{i,j}\int_{x_{j-1}}^{x_{j+1}}\int_0^1\eta_{e\omega}(v^{l\underline{i}}+\theta(\underline{v}^{l\underline{i}}-v^{l\underline{i}}))(\omega^{l\underline{i}}-\underline{\omega}^{l\underline{i}})d\theta\psi^i(x) dx   \\
\displaystyle\quad \geq -h\sum\limits_{i,j}\int_{x_{j-1}}^{x_{j+1}}\int_0^1\eta_{e\omega}(v^{l\underline{i}}+\theta(\underline{v}^{l\underline{i}}-v^{l\underline{i}}))\big[\frac{N-1}{x}\underline{\varrho}^{l\underline{i}}\\
\displaystyle\qquad\qquad-\frac{\underline{\varrho}^{l\underline{i}}}{x^{N-1}}\int_1^x\underline{\varrho}^{l\underline{i}}(y,t)dy\big]d\theta \psi^i(x) dx -Cl^{\frac{1}{2}}(1+|\log l|),
\end{array}
\end{equation}
for $3\leq\beta\leq4$, where $\psi^i=\psi(x,t_i)$.
From \eqref{eq1208} and the convexity for $\eta_e$, we have
\begin{equation}\label{eq263}
\begin{aligned}
B(\psi)\geq-Cl,  \qquad S(\psi)\geq0,
\end{aligned}
\end{equation}
and
\begin{equation}\label{eq265}
\begin{aligned}
\displaystyle D(\psi)&=\sum\limits_{i,j}\int_{x_{j-1}}^{x_{j+1}}(\eta_e(\bar{v}^{l\underline{i}})-\eta_e(\bar{v}_j^{li}))\psi_j^idx+\sum\limits_{i,j}\int_{x_{j-1}}^{x_{j+1}}(\eta_e(\bar{v}^{l\underline{i}})-\eta_e(\bar{v}_j^{li}))(\psi^i-\psi_j^i)dx\\
& \geq\sum\limits_{i,j}\int_{x_{j-1}}^{x_{j+1}}(\eta_e(\bar{v}^{l\underline{i}})-\eta_e(\bar{v}_j^{li}))(\psi^i-\psi_j^i)dx\\
 &\geq -Cl^{\alpha-\frac{8}{9}},
\end{aligned}
\end{equation}
where $\frac{8}{9}<\alpha<1$ and $\psi_j^i=\psi(x_j,t_i)$.
Thus,
\begin{equation}\label{eq266}
\begin{aligned}
&\displaystyle\iint\limits_{\Pi_T}(\eta_e(\bar{v}^l)\psi_t+q_e(\bar{v}^l)\psi_x)dxdt+\int_1^{+\infty}\eta_e(\underline{v}^l_0(x))\psi(x,0)dx\\
&\quad+h\sum\limits_{i,j}\int_{x_{j-1}}^{x_{j+1}}\int_0^1\eta_{e\omega}(\varrho^{l\underline{i}},\omega_\theta^i)(\frac{N-1}{x}\underline{\varrho}^{l\underline{i}}-\frac{\underline{\varrho}^{l\underline{i}}}{x^{N-1}}\int_1^x\underline{\varrho}^{l\underline{i}}(y,t)dy)d\theta \psi^i(x) dxdt\\
&\geq -Cl^{\frac{1}{2}}(1+|\log l|)-Cl  -Cl^{\alpha-\frac{8}{9}}  \ \ \rightarrow0 \qquad\qquad  \ \ \mbox{as } \ \ l\rightarrow0.
\end{aligned}
\end{equation}
Note that
\begin{equation}\label{eq267}
\begin{aligned}
\underline{v}^l(x,t)\rightarrow v(x,t) ,\ \bar{v}^l(x,t)\rightarrow v(x,t),\ \underline{v}^l_0(x)=\underline{v}^l(x,0)\rightarrow v_0(x)\ \ \mbox{ as } \ \ l\rightarrow0\ \ \mbox{a.e. }
\end{aligned}
\end{equation}
Therefore, using the Jensen's inequality, we have
\begin{equation}\label{eq22267}
\begin{aligned}
&\int_1^{+\infty}\eta_e(\underline{v}^l_0(x))\psi(x,0)dx\\&=\int_1^{+\infty}\eta_e(\underline{v}^l_0(x))\psi(x,0)dx|_{\rho\geq l^\beta}+\int_1^{+\infty}\eta_e(\underline{v}^l_0(x))\psi(x,0)dx|_{\rho< l^\beta}\\
&\leq\int_1^{+\infty}\eta_e(\underline{v}^l_0(x))\psi(x,0)dx+Cl^{\frac{\beta}{2}}\rightarrow \int_1^{+\infty}\eta_e(v_0(x))\psi(x,0)dx \ \ \mbox{ as } \ l\rightarrow0 \ \ \mbox{a.e, }
\end{aligned}
\end{equation}
and
\begin{equation}\label{eq2267}
\begin{aligned}
h&\sum\limits_{i,j}\int_{x_{j-1}}^{x_{j+1}}\int_0^1\eta_{e\omega}(v^{l\underline{i}}+\theta(\underline{v}^{l\underline{i}}-v^{l\underline{i}}))(\frac{N-1}{x}\underline{\varrho}^{l\underline{i}}-\frac{\underline{\varrho}^{l\underline{i}}}{x^{N-1}}\int_1^x\underline{\varrho}^{l\underline{i}}(y,t)dy)d\theta \psi^i(x) dx \\
&\rightarrow \iint\limits_{\Pi_T}\big(\frac{N-1}{x}\varrho-\frac{\varrho}{x^{N-1}}\int_1^x\varrho(s,t)ds\big)\eta_{e\omega}(v)\psi dxdt \ \quad \mbox{ as } \ \ l\rightarrow0\ \ \mbox{a.e. }
\end{aligned}
\end{equation}
Then the inequality \eqref{eq261} holds.

It is easy to check that $(\varrho(x,t),\omega(x,t))$ in \eqref{eq232}, \eqref{eq240} and \eqref{eq261} satisfy the Definition \ref{def1} by \eqref{eq119}.

\end{proof}

\end{document}